 \newtheorem{theorem}{Theorem}
 \newtheorem{proposition}[theorem]{Proposition}
 \newtheorem{lemma}[theorem]{Lemma}
\theoremstyle{definition}
\theoremstyle{remark}
\begin{document}
\title[Dispersive Flow]{A fourth-order dispersive flow into K\"ahler manifolds}
\author[H.~Chihara and E.~Onodera]{Hiroyuki Chihara and Eiji Onodera}
\address[HC]{Department of Mathematics and Computer Science, Kagoshima University, Kagoshima 890-0065, Japan}
\email{chihara@sda.att.ne.jp}
\address[EO]{Department of Mathematics, Kochi University, Kochi 780-8520, Japan}
\email{onodera@kochi-u.ac.jp}
\thanks{HC is supported by JSPS Grant-in-Aid for Scientific Research \#23340033.}
\thanks{EO is supported by JSPS Grant-in-Aid for Scientific Research \#24740090.}
\subjclass[2010]{Primary 58J47; Secondary 47G30, 53C44}
\keywords{dispersive flow, geometric analysis, gauge transform, energy method, local smoothing effect}
\begin{abstract}
We discuss a short-time existence theorem of solutions to 
the initial value problem for a fourth-order dispersive flow 
for curves parametrized by the real line into a compact K\"ahler manifold. 
Our equations geometrically generalize a physical model 
describing the motion of a vortex filament or  
the continuum limit of the Heisenberg spin chain system. 
Our results are proved by using so-called the energy method.  
We introduce a bounded gauge transform on the pullback bundle, 
and make use of local smoothing effect of the dispersive flow a little. 
\end{abstract}
\maketitle
\section{Introduction}
\label{section:introduction} 
Let $(N,J,g)$ be a compact K\"ahler manifold of real dimension $2n$ 
with a complex structure $J$ and a K\"ahler metric $g$. 
In the present paper we study the initial value problem for a mapping 
$\mathbb{R}\times\mathbb{R}\ni(t,x) \mapsto u(t,x){\in}N$ of the form 
\begin{alignat}{2}
& u_t
  =
  aJ(u)\nabla_x^3u_x
  +
  \{1+bg_u(u_x,u_x)\}J(u)\nabla_xu_x
  +
  cg_u(\nabla_xu_x,u_x)J(u)u_x
& \quad
& \text{in}
  \quad
  \mathbb{R}\times\mathbb{R},
\label{equation:pde}
\\
& u(0,x)
  =
  u_0(x)
& \quad
& \text{in}
  \quad
  \mathbb{R},
\label{equation:data}
\end{alignat}
where 
$u_t=du(\partial/\partial t)$, 
$u_x=du(\partial/\partial x)$, 
$du$ is the differential of the mapping $u$, 
$\nabla$ is the induced connection for the Levi-Civita connection $\nabla^N$ of $(N,J,g)$, 
$a\in\mathbb{R}\setminus\{0\}$ and $b,c\in\mathbb{R}$ are constants, 
and 
$u_0:\mathbb{R} \rightarrow N$ is a given initial curve on $N$. 
$u(t,\cdot)$ is a curve on $N$ for any fixed $t\in\mathbb{R}$, 
and 
$u$ describes the motion of a curve subject to the equation \eqref{equation:pde}. 
If $a,b,c=0$, then \eqref{equation:pde} is reduced to 
the one-dimensional Schr\"odinger map equation of the form 
$$
u_t=J(u)\nabla_xu_x,
$$
and its stationary solutions are geodesics on $N$. 
See, e.g., 
\cite{CSU}, 
\cite{chihara}, 
\cite{darios}, 
\cite{hasimoto}, 
\cite{koiso} 
and references therein 
for the physical background and the mathematical study of the Schr\"odinger map equation. 
\par
Here we present local expression of the covariant derivative $\nabla_x$. 
Let $y^1,\dotsc,y^{2n}$ be local coordinates of $N$. 
We denote by $\Gamma^\alpha_{\beta\gamma}$, $\alpha,\beta,\gamma=1,\dotsc,2n$,  
the Christoffel symbol of $(N,J,g)$. 
For a smooth curve $u:\mathbb{R}\rightarrow{N}$, 
$\Gamma(u^{-1}TN)$ is the set of all smooth sections 
of the pullback bundle $u^{-1}TN$. 
If we express $V{\in}\Gamma(u^{-1}TN)$ as
$$
V(x)=\sum_{\alpha=1}^{2n}V^\alpha(x)\left(\frac{\partial}{\partial{y^\alpha}}\right)_u, 
$$
then, $\nabla_xV$ is given by  
$$
\nabla_xV(x)
=
\sum_{\alpha=1}^{2n}
\left\{
\frac{{\partial}V^\alpha}{\partial{x}}(x)
+
\sum_{\beta,\gamma=1}^{2n}
\Gamma^\alpha_{\beta\gamma}\bigl(u(x)\bigr)V^\beta(x)\frac{\partial{u^\gamma}}{\partial{x}}(x)
\right\}
\left(\frac{\partial}{\partial{y^\alpha}}\right)_u.
$$
\par
Here we recall the physical background of \eqref{equation:pde}. 
The equation generalizes a model system arising in classical mechanics of the form 
\begin{equation}
\vec{u}_t
=
\vec{u}
\times
\bigl\{
a\vec{u}_{xxxx}
+
\vec{u}_{xx}
+
b\langle\vec{u}_x,\vec{u}_x\rangle\vec{u}_{xx}
+
c\langle\vec{u}_{xx},\vec{u}_x\rangle\vec{u}_x
\bigr\},
\label{equation:model}
\end{equation}
where $\mathbb{R}\times\mathbb{R}\ni(t,x) \mapsto \vec{u}(t,x)\in\mathbb{S}^2$, 
$\mathbb{S}^2$ is the two-dimensional unit sphere in $\mathbb{R}^3$, 
and 
$\vec{\xi}\times\vec{\eta}$ 
and 
$\langle\vec{\xi},\vec{\eta}\rangle$ 
are 
the vector and the inner products of 
$\vec{\xi}\in\mathbb{R}^3$ 
and 
$\vec{\eta}\in\mathbb{R}^3$ 
respectively. 
The equation \eqref{equation:model} describes the motion of a  vortex filament  
or the continuum limit of the Heisenberg spin chain system. 
In \cite{PDL} Porsezian, Daniel and Lakshmanan formulated 
the continuum limit of the Heisenberg spin chain system 
as \eqref{equation:model} with $3a+c=2b$ and $a\ne0$, 
and showed that if $c=0$ which is equivalent to $b=3a/2$, 
then \eqref{equation:model} is completely integrable. 
In \cite{fukumoto} Fukumoto studied the motion of vortex filament 
and obtained \eqref{equation:model} with with $3a+c=2b$ and $a\ne0$. 
More precisely, 
in \cite{fukumoto} $x$ describes the length of a curve $\vec{\gamma}(t,x)$, 
and $\vec{u}(t,x)=\vec{\gamma}_x(t,x)$. 
\par
It seems to be natural and fundamental to study the initial value problem 
\eqref{equation:pde}-\eqref{equation:data}. 
In particular, it is important to know the relationship between 
the geometric settings and the existence theorems of time-local and time-global solutions. 
The main difficulty of solving \eqref{equation:pde}-\eqref{equation:data} 
is the loss of derivative of order one occurring in \eqref{equation:pde} 
even in the case $N=\mathbb{S}^2$. 
If $(N,J,g)$ is a compact almost Hermitian manifold, 
then the situation becomes more difficult. 
Indeed, the loss of derivative of order three may happen 
since $\nabla^NJ$ does not necessarily vanish. 
\par
The known results on \eqref{equation:pde}-\eqref{equation:data} are limited. 
In particular, all the preceding results are concerned only with the case of $N=\mathbb{S}^2$. 
By the stereographic projection of $\mathbb{S}^2\setminus\{(0,0,1)\}$ onto $\mathbb{C}$, 
the equation \eqref{equation:model} becomes 
a complex-valued fourth-order semilinear dispersive partial differential equation. 
The time-local existence for this equation was established. 
See Huo and Jia (\cite{HJ1}, \cite{HJ2}, \cite{HJ3}), and Segata (\cite{segata1}, \cite{segata2}). 
The idea of proof of these results are based on 
making use of so-called the local smoothing effect of 
$\exp(\sqrt{-1}t\partial^4/\partial x^4)$ 
via Bourgain's Fourier restriction norm method. 
See \cite{bourgain} for this. 
The stereographic projection requires somewhat restriction on the range of the maps. 
For this reason, these results are not necessarily concerned 
with the study of maps to $\mathbb{S}^2$. 
On the other hand, 
Guo, Zeng and Su (\cite{GZS}) studied time-local existence of weak solutions 
for the model equation \eqref{equation:model} with
$a\ne0$, 
$c=0$ 
and 
$b=3a/2$ 
for  
$x \in \mathbb{R} / \mathbb{Z}$. 
In this case no smoothing effect can be expected 
since the source of the maps $\mathbb{R} / \mathbb{Z}$ is compact and 
singularities of solutions come back periodically. 
Fortunately, however, 
the integrability condition works well to overcome the loss of derivative of order one. 
\par
The purpose of the present paper is to establish the short-time existence theorem for 
the initial value problem \eqref{equation:pde}-\eqref{equation:data} 
from the point of view of geometric analysis 
(\cite{GR}, \cite{gunther}, \cite{hebey}, \cite{nishikawa}) 
and higher order linear dispersive partial differential equations 
(\cite{mizuhara}, \cite{tarama}). 
The present paper is a continuation of our geometric analysis of dispersive flows 
(\cite{chihara}, \cite{CO}, \cite{onodera1}, \cite{onodera2}, \cite{onodera3}). 
The point of view of geometric analysis sometimes offers deep insights 
into the structure of dispersive systems, and leads one to discoveries. 
For example, 
Koiso (\cite{koiso}) proved that some curvature condition on the target manifold guarantees 
the time-global existence for the Schr\"odinger flow for closed curves. 
Onodera (\cite{onodera1}) also established time-global existence theorem 
for a third-order dispersive flow for closed curves 
under some curvature condition on the target manifold. 
Moreover, Chihara (\cite{chihara}) came to understand the relationship between 
the K\"ahler condition of the target manifold and the structure of the Schr\"odinger map equation 
of a closed Riemannian manifold to a compact almost Hermitian manifold. 
\par
Here we introduce some function spaces of mappings. 
For $k=0,1,2,3,\dotsc$, 
we denote by $H^{k+1}(\mathbb{R};TN)$ the set of all continuous mappings of $\mathbb{R}$ to $N$ 
whose derivatives up to $k+1$ are all square integrable: 
$$
\lVert{u_x}\rVert_{H^{k}}^2
=
\sum_{l=0}^k
\int_{\mathbb{R}}
g_{u(x)}\bigl(\nabla_x^lu_x,\nabla_x^lu_x\bigr)
dx
<
\infty.
$$
The standard $k$-th order Sobolev space of $\mathbb{R}^d$-valued functions $\vec{z}$ 
on $\mathbb{R}$ is denoted by $H^k(\mathbb{R};\mathbb{R}^d)$, and its norm is defined by  
$$
\lVert{U}\rVert_{H^k(\mathbb{R};\mathbb{R}^d)}^2
=
\sum_{l=0}^k
\int_{\mathbb{R}}
\left\lvert
\frac{\partial^lU}{\partial x^l}
\right\rvert^2
dx,
$$ 
where $\lvert{U}\rvert=\sqrt{\langle{U,U}\rangle}$, 
and 
$\langle{U,V}\rangle$ is the standard inner product for 
$U, V \in \mathbb{R}^d$. 
Set 
$H^k(\mathbb{R})=H^k(\mathbb{R};\mathbb{C})$ 
and 
$L^2(\mathbb{R})=H^0(\mathbb{R};\mathbb{C})$ 
for short. 
Let $I$ be an interval in $\mathbb{R}$, and let $X$ be an appropriate function space.  
We denote by $C(I;X)$ the set of all $X$-valued continuous functions on $I$, 
and 
by $L^\infty(I;X)$ the set of all $X$-valued essentially bounded functions on $I$. 
\par
The main results of the present paper are the following. 
\begin{theorem}
\label{theorem:main} 
Let $k$ be an integer not smaller than six. 
For any initial mapping $u_0 \in H^{k+1}(\mathbb{R};TN)$, 
there exists a positive number $T$ depending only on 
$\lVert{u_{0x}}\rVert_{H^4(\mathbb{R};TN)}$ 
such that the initial value problem \eqref{equation:pde}-\eqref{equation:data} 
has a unique solution $u \in C\bigl([-T,T];H^{k+1}(\mathbb{R};TN)\bigr)$.  
\end{theorem}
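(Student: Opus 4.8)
The plan is to prove Theorem~\ref{theorem:main} by the energy method applied to the section $V:=u_x\in\Gamma(u^{-1}TN)$ and its covariant derivatives. Differentiating \eqref{equation:pde} once in $x$ and using that the Levi-Civita connection is torsion-free (so that $\nabla_t u_x=\nabla_x u_t$), I would first derive the evolution equation
\[
\nabla_t V = a\,J(u)\nabla_x^4 V + \text{(lower order terms)},
\]
where the crucial point is that, since $(N,J,g)$ is K\"ahler, $J$ is parallel, $\nabla^N J=0$; hence $\nabla_x\bigl(J(u)W\bigr)=J(u)\nabla_x W$ for every $W\in\Gamma(u^{-1}TN)$, and differentiating the leading term $aJ(u)\nabla_x^3 u_x$ produces no term of the form (derivative of $J$) $\times\nabla_x^{k+3}V$. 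This is exactly what removes the potential loss of derivative of order three mentioned in the introduction, reducing the problem to the loss of one derivative already present for $N=\mathbb{S}^2$. I would then set up the family of energy functionals $E_k(t)=\lVert u_x(t)\rVert_{H^k}^2=\sum_{l=0}^k\int_{\mathbb{R}}g_u(\nabla_x^l V,\nabla_x^l V)\,dx$.

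The core of the argument is the a priori estimate for $E_k$. Computing $\tfrac{d}{dt}\int_{\mathbb{R}}g_u(\nabla_x^k V,\nabla_x^k V)\,dx$ and commuting $\nabla_t$ past $\nabla_x^k$ (the commutator $[\nabla_t,\nabla_x]=R(u_t,u_x)$ bringing in curvature terms), the leading fourth-order contribution $2a\int_{\mathbb{R}}g_u(J(u)\nabla_x^{k+4}V,\nabla_x^k V)\,dx$ vanishes after integrating by parts twice, because $J$ is skew-symmetric with respect to $g$ and parallel: $g_u(J(u)\nabla_x^{k+2}V,\nabla_x^{k+2}V)=0$. The surviving dangerous terms are those losing one derivative, namely expressions of the shape (coefficient built from $V,\nabla_x V$) $\times\, g_u(J(u)\nabla_x^{k+1}V,\nabla_x^k V)$ coming from the $b$- and $c$-terms of \eqref{equation:pde} and from the curvature commutators. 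To cancel them I would introduce a bounded gauge transform: a fiberwise endomorphism $\Theta$ of $u^{-1}TN$ of exponential type, whose phase is an $x$-antiderivative of the offending coefficients. Because that antiderivative can be taken to be a bounded function of $|V|^2=g_u(V,V)$ and finitely many derivatives (controlled by $\lVert u_x\rVert_{H^4}$ through Sobolev embedding), both $\Theta$ and $\Theta^{-1}$ are bounded, and the gauged energy $\mathcal{E}_k=\sum_l\int_{\mathbb{R}}g_u(\Theta\nabla_x^l V,\Theta\nabla_x^l V)\,dx$ is equivalent to $E_k$. Working with $\mathcal{E}_k$ removes the bulk of the one-derivative loss; the small residual that the bounded gauge cannot absorb is controlled by invoking the local smoothing estimate for the linear fourth-order dispersion, which gains the missing derivative after integration in $t$. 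The outcome is a differential inequality $\tfrac{d}{dt}\mathcal{E}_k\le C(\lVert u_x\rVert_{H^4})\,\mathcal{E}_k$, in which the factor multiplying the top-order energy depends only on the $H^4$-norm of $u_x$; Gronwall's inequality then yields both the lifespan $T=T(\lVert u_{0x}\rVert_{H^4})$ and the persistence of the $H^{k+1}$-regularity on $[-T,T]$ (the hypothesis $k\ge6$ leaving enough room for the Sobolev estimates that feed the gauge-and-smoothing machinery).

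To construct the solution I would carry out these estimates on a parabolically regularized problem: adding a dissipative higher-order term, with principal symbol $-\varepsilon\xi^{6}$ say, makes the regularized equation solvable on a short time interval for each $\varepsilon>0$ by standard theory, with smooth solutions $u^\varepsilon$ (the sign is chosen so that the regularization is dissipative, solving forward and backward in time separately). Since this term contributes only terms of favorable sign to the energy identity, the gauged estimate holds uniformly in $\varepsilon$, giving bounds on $u^\varepsilon$ in $C([-T,T];H^{k+1})$ with $T$ independent of $\varepsilon$. Bounding $\partial_t u^\varepsilon$ in a lower norm through the equation and applying a compactness argument of Aubin--Lions type, I would extract a subsequence converging to a solution $u\in C([-T,T];H^{k+1}(\mathbb{R};TN))$; it is convenient to realize this through a Nash isometric embedding $N\hookrightarrow\mathbb{R}^d$, so that the approximation and passage to the limit take place among $\mathbb{R}^d$-valued Sobolev functions constrained to $N$.

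Finally, uniqueness follows from an energy estimate for the difference of two solutions, carried out in the extrinsic picture in a low norm (say $L^2$ or $H^1$ of the difference of the maps), again using a bounded gauge transform for the linearized difference equation to compensate the loss of one derivative and closing by Gronwall. The main obstacle throughout is the top-order estimate: constructing the bounded gauge $\Theta$ so that it cancels precisely the one-derivative-loss terms produced jointly by the $b$- and $c$-nonlinearities and by the curvature of $(N,J,g)$, and then verifying that the unavoidable residual is of a form controllable by the local smoothing effect. Everything else---the vanishing of the leading dispersive term, the parabolic approximation, and the compactness and uniqueness arguments---is comparatively routine once this gauge-plus-smoothing mechanism is in place.
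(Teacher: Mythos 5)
Your overall skeleton (parabolic regularization, energy estimates on covariant derivatives of $u_x$, a gauge transform to handle the loss of one derivative, compactness, then uniqueness by a low-norm difference estimate with the same gauge) matches the paper. But the key device is wrong as you describe it, and this is a genuine gap rather than a stylistic difference. You propose a \emph{fiberwise} endomorphism $\Theta$ of $u^{-1}TN$ of exponential type with bounded phase. A fiberwise endomorphism is a zeroth-order (multiplication) operator, and its commutator with the principal part $aJ(u)\nabla_x^4$ is a \emph{third-order} operator of the form $-4aJ(u)\Theta_x\nabla_x^3+\dotsb$, which still carries the factor $J$. Such terms cannot cancel the first-order bad terms (the orders do not match), and in the energy identity they are killed to leading order by the skew-symmetry $g(JX,X)=0$, so they never produce a \emph{positive} second-order quantity. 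The exponential gauge with bounded phase is the right tool for second-order Schr\"odinger problems, where $[e^{\Phi},\partial_x^2]$ is first order and achieves exact cancellation; for a fourth-order equation it achieves nothing of the sort. What the paper uses instead is a gauge containing a component of order $-1$: as in \eqref{equation:gauge1}, $V_k^\varepsilon=\nabla_x^ku^\varepsilon_x+\frac{M}{4a}\Phi^\varepsilon(t,x)J(u^\varepsilon)\nabla_x^{k-1}u^\varepsilon_x$ with $\Phi^\varepsilon_x=g(u^\varepsilon_x,u^\varepsilon_x)\geqslant0$. Commuting this lower-order piece (which carries its own $J$) with $aJ(u)\nabla_x^4$ produces $J^2=-I$ and hence the second-order \emph{elliptic} term $M\nabla_x\{g(u^\varepsilon_x,u^\varepsilon_x)\nabla_xV_k^\varepsilon\}$; after integration by parts this yields $-(2M-C)\int_{\mathbb{R}}g(u^\varepsilon_x,u^\varepsilon_x)\,g(\nabla_xV_k^\varepsilon,\nabla_xV_k^\varepsilon)\,dx$, and choosing $M$ large absorbs every loss-of-one-derivative term, whose coefficients all vanish with $u_x$. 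That $J$-to-$J^2$ mechanism, absent from your $\Theta$, is the entire point.

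Your fallback---that ``the small residual is controlled by invoking the local smoothing estimate for the linear fourth-order dispersion''---does not repair this. The estimate \eqref{equation:smoothing} is a statement about the constant-coefficient group $\exp(\sqrt{-1}at\partial^4/\partial x^4)$, and there is no justification for transferring it to the quasilinear, variable-coefficient geometric problem; making the smoothing effect usable in that setting is exactly what the order $-1$ gauge accomplishes, and the paper never invokes the linear estimate as a black box in the nonlinear proof. Two smaller points: extracting a limit by compactness gives $u\in L^\infty(0,T;H^{k+1})\cap C([0,T];H^k)$, not $C([0,T];H^{k+1})$; recovering strong continuity in the top norm requires a separate argument (weak continuity plus convergence of the gauged energy norm via the a priori inequality and lower semicontinuity), which the paper carries out and you omit. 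Also, the paper regularizes in two steps (a sixth-order $\delta$-term to solve the approximate problem by contraction, then the fourth-order $\varepsilon$-term $(-\varepsilon+aJ(u))\nabla_x^3u_x$ for the uniform estimates); keeping the $\varepsilon$-regularization at the same order as the dispersive principal part is what lets the gauge commutator treat both at once, so your single sixth-order regularization would force extra bookkeeping in the uniform-in-$\varepsilon$ step.
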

Here we remark about the order of the smoothness of solutions 
required in Theorem~\ref{theorem:main}. 
The condition $k\geqslant6$ is determined by the Sobolev embedding 
$H^1(\mathbb{R}) \subset C(\mathbb{R}) \cap L^\infty(\mathbb{R})$. 
In our construction of solutions, 
we need $k\geqslant4$ 
for the boundedness of the both hand sides of the equation \eqref{equation:pde}. 
Our proof of uniqueness of solutions requires $k\geqslant6$ 
for the boundedness of the second derivative of \eqref{equation:pde}. 
\par
We shall prove Theorem~\ref{theorem:main} 
by using so-called the parabolic regularization 
and the uniform energy estimates of solutions to the regularized problems. 
In the latter part we introduce a gauge transform on the pullback bundle to overcome 
loss of derivative of order one. 
In other words, 
we slightly make use of local smoothing effect of dispersive equations 
via the gauge transform. 
In terms of pseudodifferential calculus, 
this consists of the identity and a pseudodifferential operator of order $-1$, 
The commutator between the lower order term of the gauge transform 
and the principal part of the equation becomes a second-order elliptic operator 
absorbing the loss of one derivative. 
This idea was actually applied to solving a third-order dispersive flow 
in \cite{CO} and \cite{onodera3}. 
\par
The plan of the present paper is as follows. 
In Section~\ref{section:mizuhara} 
we pick up a one-dimensional fourth-order 
linear dispersive partial differential equation, 
and illustrate the local smoothing effect and the gauge transform.  
Section~\ref{section:parabolic} is concerned with solving regularized problem. 
Section~\ref{section:energy} is devoted to the construction of a time-local solution. 
Finally, we shall prove the uniqueness of solution 
and recover the continuity in time of solution 
in Section~\ref{section:recovery}. 
\section{An Auxiliary Linear Problem}
\label{section:mizuhara}
The purpose of this section is to illustrate our idea of proof of Theorem~\ref{theorem:main}. 
Consider the initial value problem of the form 
\begin{alignat}{2}
& u_t
  =
  \sqrt{-1}au_{xxxx}
  +
  \sqrt{-1}\{\beta(t,x)u_x\}_x
  +
  \gamma(t,x)u_x
& \quad
& \text{in}
  \quad
  \mathbb{R}\times\mathbb{R},
\label{equation:pde1}
\\
& u(0,x)
  =
  u_0(x)
& \quad
& \text{in}
  \quad
  \mathbb{R},
\label{equation:data1}
\end{alignat}
where 
$u(t,x)$ is a complex-valued unknown function of $(t,x) \in \mathbb{R}\times\mathbb{R}$, 
$a \in \mathbb{R}\setminus\{0\}$ is a constant, 
$\beta(t,x) \in C\bigl(\mathbb{R};\mathscr{B}^\infty(\mathbb{R})\bigr)$ is real-valued, 
$\gamma(t,x) \in C\bigl(\mathbb{R};\mathscr{B}^\infty(\mathbb{R})\bigr)$ is complex-valued, 
$\mathscr{B}^\infty(\mathbb{R})$ 
is the set of all bounded smooth functions on $\mathbb{R}$ 
whose derivatives of any order are all bounded, 
and 
$u_0(x)$ is an initial data. 
We shall prove the following. 
\begin{proposition}
\label{theorem:aux}
Suppose that there exists a function $\phi(x) \in \mathscr{B}^\infty(\mathbb{R})$ such that 
$$
\lvert
\operatorname{Im} \gamma(t,x)
\rvert
\leqslant
\phi(x),
\quad
(t,x)\in\mathbb{R}^2,
\qquad
\int_{\mathbb{R}}
\phi(x)
dx
<\infty. 
$$
Then, the initial value problem 
{\rm \eqref{equation:pde1}-\eqref{equation:data1}} is $L^2$-well-posed, that is, 
for any $u_0 \in L^2(\mathbb{R})$, 
{\rm \eqref{equation:pde1}-\eqref{equation:data1}} has a unique solution 
$u \in C\bigl(\mathbb{R};L^2(\mathbb{R})\bigr)$.  
\end{proposition}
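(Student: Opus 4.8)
The plan is to remove the loss of one derivative caused by $\operatorname{Im}\gamma$ by means of a bounded, invertible gauge transform on $L^2(\mathbb{R})$, to close an $L^2$ energy estimate for the transformed unknown, and then to deduce well-posedness from that estimate by standard arguments for linear evolution equations. Write $\gamma=\gamma_1+\sqrt{-1}\gamma_2$ with $\gamma_1,\gamma_2$ real. First I record that the principal part is harmless in the $L^2$ energy identity: since $a$ and $\beta$ are real,
\[
\operatorname{Re}\langle\sqrt{-1}au_{xxxx},u\rangle_{L^2}=\operatorname{Re}(\sqrt{-1}a\lVert u_{xx}\rVert_{L^2}^2)=0,\qquad\operatorname{Re}\langle\sqrt{-1}(\beta u_x)_x,u\rangle_{L^2}=0,
\]
while the real coefficient $\gamma_1$ contributes only $-\tfrac12\int_{\mathbb{R}}(\gamma_1)_x\lvert u\rvert^2\,dx$, bounded by $\lVert u\rVert_{L^2}^2$. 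The one genuinely dangerous term is $-2\int_{\mathbb{R}}\gamma_2\operatorname{Im}(u_x\bar u)\,dx$, whose modulus is at most $2\int_{\mathbb{R}}\phi\lvert u_x\rvert\lvert u\rvert\,dx$; this is exactly the loss of one derivative.

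The heart of the proof is the construction of the gauge. Put $\Phi(x)=\int_{-\infty}^x\phi(y)\,dy$, which is bounded on $\mathbb{R}$ precisely because $\int_{\mathbb{R}}\phi\,dx<\infty$, and note $\Phi'=\phi\in\mathscr{B}^\infty(\mathbb{R})$. Fix $\chi\in C^\infty(\mathbb{R})$ with $\chi(\xi)=0$ for $\lvert\xi\rvert\leqslant1$ and $\chi(\xi)=1$ for $\lvert\xi\rvert\geqslant2$, and let $K=I+K_{-1}$, where $K_{-1}$ is the pseudodifferential operator $\mathrm{Op}(k_{-1})$ with symbol
\[
k_{-1}(x,\xi)=\frac{\lambda}{4a}\,\Phi(x)\,\frac{\chi(\xi)}{\xi},
\]
$\lambda>0$ a constant. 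Since $\Phi$ is bounded, $k_{-1}$ is a symbol of order $-1$, so $K_{-1}$ is bounded on $L^2(\mathbb{R})$ with operator norm proportional to $\lambda$; choosing $\lambda$ small enough that $\lVert K_{-1}\rVert_{L^2\to L^2}<1$, the operator $K$ is invertible by a Neumann series, and $\lVert Ku\rVert_{L^2}$ and $\lVert u\rVert_{L^2}$ are equivalent. Set $v=Ku$ and let $A$ denote the spatial operator on the right-hand side of \eqref{equation:pde1}. Then $v_t=KAK^{-1}v$, and the symbol calculus gives
\[
KAK^{-1}=A+[K_{-1},\sqrt{-1}a\partial_x^4]+R,
\]
with $R$ of order at most one. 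A computation of the Poisson bracket shows that $[K_{-1},\sqrt{-1}a\partial_x^4]$ is a second-order operator whose leading symbol equals $-\lambda\,\phi(x)\,\xi^2$ modulo terms of order at most one.

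Now I run the energy estimate for $v$. As above, the contributions of $\sqrt{-1}a\partial_x^4$, of $\sqrt{-1}(\beta\,\cdot\,)_x$ and of $\gamma_1\partial_x$ are each $O(\lVert v\rVert_{L^2}^2)$, and the same is true of $R$ after symmetrizing its first-order part. Integrating by parts (and using $\phi\in\mathscr{B}^\infty$ to control the subprincipal terms) the gauge-induced second-order operator contributes
\[
\operatorname{Re}\langle[K_{-1},\sqrt{-1}a\partial_x^4]v,\,v\rangle_{L^2}\leqslant-\lambda\int_{\mathbb{R}}\phi\lvert v_x\rvert^2\,dx+C\lVert v\rVert_{L^2}^2,
\]
the sign being correct because $\phi\geqslant0$. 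Against this, Young's inequality bounds the dangerous term by $2\int_{\mathbb{R}}\phi\lvert v_x\rvert\lvert v\rvert\,dx\leqslant\lambda\int_{\mathbb{R}}\phi\lvert v_x\rvert^2\,dx+\lambda^{-1}\lVert\phi\rVert_{L^\infty}\lVert v\rVert_{L^2}^2$, so the negative definite gauge term absorbs the derivative loss exactly. Collecting everything yields $\frac{d}{dt}\lVert v\rVert_{L^2}^2\leqslant C\lVert v\rVert_{L^2}^2$, and Gronwall's inequality together with $\lVert v\rVert_{L^2}\sim\lVert u\rVert_{L^2}$ gives the a priori bound $\lVert u(t)\rVert_{L^2}\leqslant Ce^{C\lvert t\rvert}\lVert u_0\rVert_{L^2}$ on every interval.

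Finally I convert this into well-posedness. Uniqueness is immediate by applying the a priori bound to the difference of two solutions. For existence I would approximate by parabolic regularization, adding $-\varepsilon\partial_x^4$ (whose symbol $-\varepsilon\xi^4$ is real and negative) to obtain a forward-solvable parabolic problem, observe that the gauge estimate above is uniform in $\varepsilon$, and pass to the limit to get $u\in C(\mathbb{R};L^2(\mathbb{R}))$; strong continuity in time follows from weak continuity together with continuity of $t\mapsto\lVert u(t)\rVert_{L^2}$. The backward-in-time direction is covered by the same argument applied to the formal adjoint, whose first-order coefficient $-\bar\gamma$ has imaginary part $\gamma_2$ and hence satisfies the identical bound by $\phi$. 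The main obstacle is the gauge construction of the second step: one must produce a bounded, invertible $K$ whose commutator with the principal part is a second-order operator with coefficient comparable to $\phi$ and of dissipative sign, and it is precisely the hypothesis $\int_{\mathbb{R}}\phi\,dx<\infty$ that makes $\Phi$, and therefore $K$, bounded on $L^2(\mathbb{R})$; without it the gauge would be unbounded and the method would break down.
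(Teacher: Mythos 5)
Your proposal is correct and follows essentially the same route as the paper: the gauge transform $K=I+\mathrm{Op}\bigl(\tfrac{\lambda}{4a}\Phi(x)\chi(\xi)/\xi\bigr)$ is exactly the paper's operator $\Lambda=I+\tilde{\Lambda}$ with $\tilde{\lambda}(x,\xi)=\Phi(x)\varphi(\xi)/(4a\xi)$, the commutator with $\sqrt{-1}a\partial_x^4$ produces the same dissipative second-order term $\sim\phi(x)\partial_x^2$, and the dangerous imaginary first-order term is absorbed by the identical weighted Cauchy--Schwarz/Young argument, followed by Gronwall and regularization. The only differences are cosmetic: you secure invertibility of the gauge by taking the coefficient $\lambda$ small, whereas the paper keeps the coefficient fixed and pushes the frequency cutoff $r$ high (both are Neumann-series arguments), and you base $\Phi$ at $-\infty$ rather than $0$.
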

Roughly speaking, 
Proposition~\ref{theorem:aux} says that if $\operatorname{Im} \gamma(t,x)$ 
is integrable in $x \in \mathbb{R}$ uniformly for $t \in \mathbb{R}$, 
one can solve the initial value problem 
\eqref{equation:pde1}-\eqref{equation:data1}. 
In other words,  
$\operatorname{Im} \gamma(t,x)$ is seemingly an obstruction to the $L^2$-well-posedness, 
but can be resolved by the local smoothing effect of 
$\exp(\sqrt{-1}at\partial^4/\partial x^4)$ described as   
\begin{equation}
\left\lVert
(1+x^2)^{-\delta/4}
\left(-\frac{\partial^2}{\partial x^2}\right)^{3/4} 
\exp\left(\sqrt{-1}at\frac{\partial^4}{\partial x^4}\right)
u_0
\right\rVert_{L^2(\mathbb{R}^2)}
\leqslant
C
\lVert{u_0}\rVert_{L^2(\mathbb{R})},
\quad
\delta>1. 
\label{equation:smoothing}
\end{equation}
This shows that solutions to the initial value problem for $u_t=\sqrt{-1}au_{xxxx}$ gains 
extra smoothness of order $3/2$ in  $x$. 
See \cite{c} for instance. 
We do not need to make full use of \eqref{equation:smoothing} 
for proving Proposition~\ref{theorem:aux}. 
We have only to use the gain of smoothness of order one for this. 
This means that Proposition~\ref{theorem:aux} is never sharp, and 
that the condition given there is too strong. 
In fact, Mizuhara (\cite{mizuhara}) and Tarama (\cite{tarama}) studied 
the necessary and sufficient conditions for $L^2$-well-posedness of 
the initial value problem for more general higher-order linear dispersive 
partial differential equations in one space dimension. 
Under the condition in Proposition~\ref{theorem:aux}, 
one can eliminate $\operatorname{Im} \gamma(t,x)$ in \eqref{equation:pde1} exactly 
by using a gauge transform. 
However, we shall actually make the local smoothing effect visible 
by using another gauge transform with $\phi(x)$. 
This will be convenient for applying our idea to \eqref{equation:pde} 
since the corresponding problematic terms in \eqref{equation:pde} are very complicated.  
\begin{proof}[Proof of Proposition~\ref{theorem:aux}] 
We shall give only the outline of the energy estimates. 
Here we make use of elementary pseudodifferential calculus. 
See \cite{kumano-go} and \cite{taylor} for instance. 
Let $r>0$ be a sufficiently large constant. 
Pick up $\varphi(\xi) \in C^\infty(\mathbb{R})$ such that 
$$
\varphi(\xi)=1 \quad (\lvert\xi\rvert \geqslant r+1),
\qquad
\varphi(\xi)=0 \quad (\lvert\xi\rvert \leqslant r). 
$$
Here we introduce a pseudodifferential operator $\Lambda=I+\tilde{\Lambda}$ of order zero, 
where $I$ is the identity operator, and 
$$
\tilde{\Lambda}v(x)
=
\frac{1}{2\pi}
\iint_{\mathbb{R}^2}
e^{\sqrt{-1}(x-y)\xi}
\tilde{\lambda}(x,\xi)
v(y)
dyd\xi,
$$
$$
\tilde{\lambda}(x,\xi)
=
\Phi(x)\frac{\varphi(\xi)}{4a\xi},
\quad
\Phi(x)
=
\int_0^x
\phi(y)
dy.
$$
$\Lambda$ is a bounded linear operator of $L^2(\mathbb{R})$ to $L^2(\mathbb{R})$, 
and invertible since $\Lambda=I+\mathcal{O}(r^{-1})$ provided that $r$ is sufficiently large. 
Indeed the Neumann series 
$I+\sum_{l=1}^\infty(-\tilde{\Lambda})^l$ 
is the inverse operator of $\Lambda$. 
Hence $\Lambda$ is an automorphism on $L^2(\mathbb{R})$. 
\par
Set $v(t,x)=\Lambda{u}(t,x)$, i.e., $u=\Lambda^{-1}v$. 
Apply $\Lambda$ to \eqref{equation:pde1}. 
Set $D_x=-\sqrt{-1}\partial/\partial x$ for short. 
Here we denote by $\mathscr{L}$ the set of all $L^2$-bounded operators on $\mathbb{R}$. 
In what follows, 
different positive constants are denoted by the same letter $C$, 
and 
different operators in $C(\mathbb{R};\mathscr{L})$ 
are denoted by the same notation $P(t)$.  
Then we have 
\begin{align*}
  \Lambda u_t 
& = v_t, 
\\
  \Lambda \sqrt{-1}au_{xxxx} 
& =
  \sqrt{-1}a(I+\tilde{\Lambda})D_x^4
  (I-\tilde{\Lambda}+\tilde{\Lambda}^2-\tilde{\Lambda}^3+\dotsb)v
\\
& =
  \sqrt{-1}aD_x^4
  (I-\tilde{\Lambda}+\tilde{\Lambda}^2-\tilde{\Lambda}^3+\dotsb)v
\\
& +
  \sqrt{-1}a\tilde{\Lambda}D_x^4
  (I-\tilde{\Lambda}+\tilde{\Lambda}^2-\tilde{\Lambda}^3+\dotsb)v
\\
& =
  \sqrt{-1}aD_x^4v
\\
& -
  \sqrt{-1}aD_x^4\tilde{\Lambda}
  (I-\tilde{\Lambda}+\tilde{\Lambda}^2-\tilde{\Lambda}^3+\dotsb)v
\\
& +
  \sqrt{-1}a\tilde{\Lambda}D_x^4
  (I-\tilde{\Lambda}+\tilde{\Lambda}^2-\tilde{\Lambda}^3+\dotsb)v  
\\
& =
  \sqrt{-1}aD_x^4v
  +
  \sqrt{-1}a\bigl[\tilde{\Lambda},D_x^4\bigr]
  (I-\tilde{\Lambda}+\tilde{\Lambda}^2-\tilde{\Lambda}^3+\dotsb)v
\\
& =
  \sqrt{-1}aD_x^4v
  +
  \sqrt{-1}a\bigl[\tilde{\Lambda},D_x^4\bigr]v
  -
  \sqrt{-1}a\bigl[\tilde{\Lambda},D_x^4\bigr]\tilde{\Lambda}v
  +
  P(t)v,
\\
  \Lambda \sqrt{-1}\{\beta(t,x)u_x\}_x 
& = 
  \sqrt{-1}\{\beta(t,x)v_x\}_x +P(t)v,
\\
  \Lambda\gamma(t,x)u_x
& = 
  \gamma(t,x)v_x+P(t)v.  
\end{align*} 
Since 
$$
\sqrt{-1}a\bigl[\tilde{\Lambda},D_x^4\bigr]
=
\phi(x)\frac{\partial^2}{\partial x^2}
+
\frac{3}{2}\phi^\prime(x)\frac{\partial}{\partial x} 
+
P(t), 
$$
we have 
\begin{align*}
  \Lambda \sqrt{-1}au_{xxxx}
& =
  \sqrt{-1}aD_x^4v
  +
  \phi(x)v_{xx}
  +
  \frac{3}{2}\phi^\prime(x)v_x
  -
  \sqrt{-1}
  \frac{\phi(x)\Phi(x)}{4a}v_x
  +
  P(t)v
\\
& =
  \sqrt{-1}aD_x^4v
  +
  \{\phi(x)v_x\}_x
  +
  \frac{1}{2}\phi^\prime(x)v_x
  -
  \sqrt{-1}
  \frac{\phi(x)\Phi(x)}{4a}v_x
  +
  P(t)v.
\end{align*}
Combining the above, we obtain 
\begin{align*}
  v_t
& =
  \sqrt{-1}av_{xxxx}
  +
  \{\phi(x)v_x\}_x
  +
  \sqrt{-1}\{\beta(t,x)v_x\}_x
\\
& +
  \left\{\operatorname{Re}\gamma(t,x)+\frac{\phi^\prime(x)}{2}\right\}
  v_x
  +
  \sqrt{-1}
  \left\{
  \operatorname{Im}\gamma(t,x)+\frac{\phi(x)\Phi(x)}{4a}
  \right\}
  v_x
  +
  P(t)v.
\end{align*}
Using this, we deduce 
$$
\frac{d}{dt}
\int_{\mathbb{R}}
\lvert{v}\rvert^2
dx
=
-
2
\int_{\mathbb{R}}
\phi(x)
\lvert{v_x}\rvert^2
dx
+
\sqrt{-1}
\int_{\mathbb{R}}
\gamma_1(t,x)
(v_x\overline{v}-v\overline{v_x})
dx
+
2
\operatorname{Re}
\int_{\mathbb{R}}
\{P(t)v\}\overline{v}
dx,
$$
$$
\gamma_1(t,x)
=
\operatorname{Im}\gamma(t,x)+\frac{\phi(x)\Phi(x)}{4a}.
$$
Since $\gamma_1(t,x)=\mathcal{O}\bigl(\phi(x)\bigr)$, 
the Schwarz inequality implies that 
\begin{align*}
  \left\lvert
  \int_{\mathbb{R}}
  \gamma_1(t,x)
  (v_x\overline{v}-v\overline{v_x})
  dx
  \right\rvert
& \leqslant
  C
  \left\{
  \int_{\mathbb{R}}
  \phi(x)
  \lvert{v_x}\rvert^2
  dx
  \right\}^{1/2}
  \left\{
  \int_{\mathbb{R}}
  \phi(x)
  \lvert{v}\rvert^2
  dx
  \right\}^{1/2}
\\
& \leqslant
  C
  \left\{
  \int_{\mathbb{R}}
  \phi(x)
  \lvert{v_x}\rvert^2
  dx
  \right\}^{1/2}
  \left\{
  \int_{\mathbb{R}}
  \lvert{v}\rvert^2
  dx
  \right\}^{1/2}
\\
& \leqslant
  \int_{\mathbb{R}}
  \phi(x)
  \lvert{v_x}\rvert^2
  dx
  +
  C
  \int_{\mathbb{R}}
  \lvert{v}\rvert^2
  dx.
\end{align*}
Hence for any $T>0$ there exists a constant $C_T>0$ depending on $T>0$ such that 
$$
\frac{d}{dt}
\int_{\mathbb{R}}
\lvert{v}\rvert^2
dx
+
\int_{\mathbb{R}}
\phi(x)
\lvert{v_x}\rvert^2
dx
\leqslant
C_T
\int_{\mathbb{R}}
\lvert{v}\rvert^2
dx
$$
for $t\in[0,T]$. This implies that 
$$
\int_{\mathbb{R}}
\lvert{v(t,x)}\rvert^2
dx
+
\int_0^t
e^{C_T(t-s)}
\phi(x)
\lvert{v_x(t,x)}\rvert^2
dx
\leqslant
e^{C_Tt}
\int_{\mathbb{R}}
\lvert{v(0,x)}\rvert^2
dx,
\quad
t\in[0,T].
$$
The same inequality holds for the negative direction of $t$. 
Using these energy estimates, we can prove Proposition~\ref{theorem:aux}. 
We omit the detail. 
\end{proof} 
%
%
\section{Parabolic Regularization}
\label{section:parabolic}
In this section we shall solve 
the initial value problem for a regularized equation of the form 
\begin{alignat}{2}
  u_t
& =
  \bigl(-\varepsilon+aJ(u)\bigr)\nabla_x^3u_x
&
&
\nonumber
\\
& +
  \{1+bg_u(u_x,u_x)\}J(u)\nabla_xu_x
  +
  cg_u(\nabla_xu_x,u_x)J(u)u_x
& \quad
& \text{in}
  \quad
  \mathbb(0,\infty)\times\mathbb{R},
\label{equation:pde2}
\\
  u(0,x)
& =
  u_0(x)
& \quad
& \text{in}
  \quad
  \mathbb{R},
\label{equation:data2}
\end{alignat}
where $\varepsilon$ is a positive parameter. 
We shall use a sequence $\{u^\varepsilon\}_{\varepsilon \in (0,1]}$ 
of the solutions to \eqref{equation:pde2}-\eqref{equation:data2} 
to construct a solution to 
\eqref{equation:pde}-\eqref{equation:data} 
in the next section. 
The results of this section are the following. 
\begin{lemma}
\label{theorem:parabolic1}
Let $k$ be an integer not smaller than four. 
For any initial mapping $u_0 \in H^{k+1}(\mathbb{R};TN)$, 
there exists a positive number $T_\varepsilon$ depending only on 
$\varepsilon>0$ and $\lVert{u_{0x}}\rVert_{H^4(\mathbb{R};TN)}$ 
such that the initial value problem \eqref{equation:pde2}-\eqref{equation:data2} 
has a unique solution $u^\varepsilon \in C\bigl([0,T_\varepsilon];H^{k+1}(\mathbb{R};TN)\bigr)$.   
\end{lemma}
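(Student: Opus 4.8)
The plan is to regard \eqref{equation:pde2} as a quasilinear parabolic system and to solve it by a standard linearization-and-contraction argument, exploiting the fact that for fixed $\varepsilon>0$ the dissipation coming from $-\varepsilon\nabla_x^3u_x$ removes the loss of derivatives. First I would embed $(N,g)$ isometrically into a Euclidean space $\mathbb{R}^d$ by the Nash embedding theorem, extend $J$ and the second fundamental form of $N$ smoothly to a tubular neighborhood, and rewrite every covariant derivative $\nabla_x$ in terms of $\partial/\partial x$ and the second fundamental form. In these coordinates \eqref{equation:pde2} becomes an $\mathbb{R}^d$-valued equation whose principal part is $\bigl(-\varepsilon+aJ(u)\bigr)\partial^4u/\partial x^4$ modulo terms of order at most three. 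The decisive structural point is that, since $J(u)^2=-I$ and $J(u)$ is skew-symmetric with respect to $g$, the endomorphism $-\varepsilon+aJ(u)$ of $T_uN$ has eigenvalues $-\varepsilon\pm\sqrt{-1}a$, so the symmetric part of its principal symbol $\bigl(-\varepsilon+aJ(u)\bigr)\xi^4$ equals $-\varepsilon\xi^4$ times the identity. Hence for each fixed $\varepsilon>0$ the regularized equation is genuinely parabolic of fourth order, uniformly on the range of $u$, and the obstruction discussed in Section~\ref{section:introduction} disappears.

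Next I would construct the solution. Because the source $\mathbb{R}$ is noncompact and the target is a manifold, I would look for the solution in the affine space $u_0+C\bigl([0,T_\varepsilon];H^{k+1}(\mathbb{R};\mathbb{R}^d)\bigr)$; since $u_{0x}\in H^k$ with $k\geqslant4$, all spatial derivatives of $u_0$ decay at infinity, and the flow keeps the behavior of $u$ at spatial infinity fixed. Freezing the top-order coefficient $-\varepsilon+aJ(w)$ at a given map $w$, the linearized operator is uniformly parabolic and generates an analytic semigroup on $H^s(\mathbb{R};\mathbb{R}^d)$; equivalently, the associated energy estimate produces the dissipative term $-2\varepsilon\lVert\partial_x^2(\partial_x^lu)\rVert_{L^2}^2$, which absorbs every lower-order contribution, the genuinely dispersive term $aJ(u)\partial_x^4u$ contributing, at top order, only a skew-symmetric piece that vanishes in the energy identity. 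Running the iteration and invoking the contraction mapping principle on a short interval $[0,T_\varepsilon]$ yields a unique solution of the extended $\mathbb{R}^d$-valued problem. Since by the Sobolev embedding $H^4(\mathbb{R})\subset C^3(\mathbb{R})$ the $H^4$-norm of $u_{0x}$ controls the size of all the coefficients, the existence time $T_\varepsilon$ depends only on $\varepsilon$ and $\lVert u_{0x}\rVert_{H^4(\mathbb{R};TN)}$; the higher regularity $H^{k+1}$ then propagates on the same interval by differentiating the equation and running the energy estimate at order $k$.

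It remains to check that the $\mathbb{R}^d$-valued solution actually takes values in $N$, so that it solves the intrinsic equation \eqref{equation:pde2}. By construction every term on the right-hand side of \eqref{equation:pde2} --- namely $\nabla_x^3u_x$, $J(u)\nabla_x^3u_x$, $J(u)\nabla_xu_x$ and $g_u(\nabla_xu_x,u_x)J(u)u_x$ --- is tangent to $N$ whenever $u\in N$, so $N$ is invariant under the extended flow. To make this rigorous I would choose local defining functions $\rho=(\rho^1,\dotsc,\rho^{d-2n})$ of $N$, derive a parabolic differential inequality for $\rho\bigl(u(t,\cdot)\bigr)$, and conclude from $\rho(u_0)\equiv0$ and a Gronwall argument that $\rho\bigl(u(t,\cdot)\bigr)\equiv0$; this is in essence the uniqueness statement for the parabolic problem, which I would obtain from the same energy estimate applied to the difference of two solutions.

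The main obstacle is the bookkeeping forced by the noncompactness of $\mathbb{R}$ together with the manifold-valued unknown: one must set up the Sobolev spaces and the fixed-point space so that the composition (Nemytskii) maps $u\mapsto J(u)$, $u\mapsto\Gamma(u)$ and the like are smooth between the relevant Banach spaces and so that the data at spatial infinity is preserved, and one must verify the invariance of $N$ cleanly. This difficulty is, however, soft and far from sharp, precisely because $\varepsilon>0$ is fixed and the fourth-order dissipation of $-\varepsilon\partial_x^4$ dominates everything in sight. The genuine difficulty of the problem, namely obtaining estimates with constants independent of $\varepsilon$ so that one may let $\varepsilon\downarrow0$, does not enter here; it is deferred to the uniform energy estimates and the gauge transform of the later sections.
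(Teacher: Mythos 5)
Your proposal is sound in outline, but it is not the paper's argument: it is exactly the ``direct approach'' that the authors mention and then deliberately sidestep. The paper states that solving \eqref{equation:pde2}--\eqref{equation:data2} directly, via the fundamental solution of the quasilinear parabolic operator and the Leray--Schauder fixed point theorem, ``seems to be very complicated.'' Instead it introduces a \emph{second} regularization $\delta\nabla_x^5u_x$ (problem \eqref{equation:pde3}--\eqref{equation:data3}), whose principal part after the Nash embedding is the constant-coefficient scalar operator $\delta\partial_x^6$. The solution operator is then an explicit Fourier multiplier obeying $\lVert e^{t\delta\partial_x^6}f\rVert_{H^{6-\alpha}}\leqslant C_{\delta,\alpha}t^{-(6-\alpha)/6}\lVert f\rVert_{L^2}$, so the \emph{entire} fourth-order term, including $aJ(u)\nabla_x^3u_x$, is treated as a perturbation in a contraction argument (Lemmas~\ref{theorem:parabolic2} and \ref{theorem:parabolic3}); the constraint $\rho(v)=0$ is recovered by a weighted energy argument with $e^{-\eta x^2}$ and Beppo--Levi (needed because $\delta v_{xxxxxx}$ is not tangent along $w(N)$ and $\rho(v)$ is a priori only bounded); and Lemma~\ref{theorem:parabolic1} finally follows from energy estimates \emph{uniform in $\delta$} and compactness as $\delta\downarrow0$. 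Your one-step scheme eliminates the second parameter and the double limit, but the price is precisely the machinery the authors avoided: generation of an analytic semigroup (or a G\r{a}rding-type estimate, or maximal regularity) for a fourth-order system with variable \emph{matrix} coefficients, together with the standard two-norm contraction for quasilinear problems, since the difference of two iterates loses four derivatives through the coefficient $a\tilde{J}(w)$. Both routes rest on the same structural facts: skew-symmetry of $J$ kills the top-order contribution of $aJ\partial_x^4$ in the energy identity, and $-\varepsilon$ supplies the dissipation, with constants of size $1/\varepsilon$ --- which is why $T_\varepsilon$ degenerates as $\varepsilon\downarrow0$ in both treatments, and why the genuinely hard work is deferred to Section~\ref{section:energy}, as you correctly observe.

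One step of your outline deserves more care than you give it, because parabolicity and invariance of $N$ pull in opposite directions once the equation is extended off $N$. If you extend the covariant derivatives in projected form, so that the right-hand side is tangent at $\Pi(u)$ and invariance is essentially immediate, then the extrinsic leading coefficient is $\bigl(-\varepsilon+a\tilde{J}\bigr)P\bigl(\Pi(u)\bigr)$, whose symmetric part $-\varepsilon P$ is \emph{degenerate} in the normal directions, and your claim of uniform parabolicity fails off $N$. If instead you expand by the Gauss formula, so that the leading coefficient is $-\varepsilon I+a\tilde{J}P$ and the system is uniformly parabolic, then the right-hand side is no longer exactly tangent at $\Pi(u)$, and the invariance argument cannot be a pointwise Gronwall inequality: it must be run as an $L^2$ energy estimate for $\rho(u)$ in which the dissipation $-\varepsilon\lVert\rho(u)_{xx}\rVert_{L^2}^2$ absorbs the difference terms produced by evaluating the nonlinearity at $u$ rather than at $\Pi(u)$. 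Either version can be completed, but this is exactly where the ``very complicated'' bookkeeping lives, and it is the reason the paper preferred the $\delta$-route with its cruder but fully explicit constant-coefficient smoothing.
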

The equation \eqref{equation:pde2} corresponds to 
quasilinear parabolic partial differential equations. 
It is possible to solve \eqref{equation:pde2}-\eqref{equation:data2} directly 
by using the fundamental solution of the differential operator 
$$
\frac{\partial}{\partial t}
-
\bigl(-\varepsilon+aJ(u)\bigr)
\bigl(\nabla^N_{du(\partial/\partial x)}\bigr)^3\frac{\partial}{\partial x}
$$
and so-called the Leray-Schauder fixed point theorem. 
We embed \eqref{equation:pde2} 
in an appropriate Euclidean space $\mathbb{R}^d$ 
by the Nash isometric embedding $w:N \rightarrow \mathbb{R}^d$ 
(See e.g., \cite{GR} and \cite{gunther}), 
and deal with it as a system of quasilinear partial differential equations. 
In particular, we need to take care of the range of solutions to the embedded equation. 
Unfortunately, however, 
the above mentioned approach to \eqref{equation:pde2} seems to be very complicated. 
For this reason, 
we consider more regularized initial value problem of the form 
\begin{alignat}{2}
  u_t
& =
  \delta\nabla_x^5u_x
  +
  \bigl(-\varepsilon+aJ(u)\bigr)\nabla_x^3u_x
&
&
\nonumber
\\
& +
  \{1+bg_u(u_x,u_x)\}J(u)\nabla_xu_x
  +
  cg_u(\nabla_xu_x,u_x)J(u)u_x
& \quad
& \text{in}
  \quad
  \mathbb(0,\infty)\times\mathbb{R},
\label{equation:pde3}
\\
  u(0,x)
& =
  u_0(x)
& \quad
& \text{in}
  \quad
  \mathbb{R},
\label{equation:data3}
\end{alignat}
and we shall prove Lemma~\ref{theorem:parabolic1} elementally. 
Here $\delta$ is a positive parameter.   
Let $\{u^{\varepsilon,\delta}\}_{\delta>0}$ be a sequence of solutions to 
\eqref{equation:pde3}-\eqref{equation:data3} for any fixed $\varepsilon>0$. 
We will get a sequence of solutions $\{u^\varepsilon\}_{\varepsilon>0}$ 
to \eqref{equation:pde2}-\eqref{equation:data2} 
by the standard compactness arguments as $\delta \downarrow 0$. 
We first prove the following. 
\begin{lemma}
\label{theorem:parabolic2}
Let $k$ be an integer not smaller than four. 
For any initial mapping $u_0 \in H^{k+1}(\mathbb{R};TN)$, 
there exists a positive number $T_{\varepsilon,\delta}$ depending only on 
$\varepsilon>0$, $\delta>0$ and $\lVert{u_{0x}}\rVert_{H^4(\mathbb{R};TN)}$ 
such that the initial value problem \eqref{equation:pde3}-\eqref{equation:data3} 
has a unique solution 
$u^{\varepsilon,\delta} \in C\bigl([0,T_{\varepsilon,\delta}];H^{k+1}(\mathbb{R};TN)\bigr)$.   
\end{lemma}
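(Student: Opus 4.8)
\emph{Overall strategy.} The plan is to fix a Nash isometric embedding $w\colon N\rightarrow\mathbb{R}^d$, set $U=w\circ u$, and treat \eqref{equation:pde3}-\eqref{equation:data3} as a semilinear parabolic system of order six for the $\mathbb{R}^d$-valued unknown $U$. Expressing the induced connection through the orthogonal projection $P(U)$ onto $dw(T_uN)$ and the second fundamental form, each covariant derivative $\nabla_x$ becomes $P(U)\partial_x$ modulo terms of lower order, so $\delta\nabla_x^5u_x$ embeds as $\delta P(U)\partial_x^6U$ plus terms of order at most five whose coefficients are smooth functions of $U,\partial_xU,\dots,\partial_x^5U$. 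Since $N$ is compact, $g$, $J$, the Christoffel symbols and the second fundamental form extend to bounded smooth tensors with bounded derivatives on a tubular neighborhood of $w(N)$; after a cut-off they give a globally defined, bounded, smooth nonlinearity on $\mathbb{R}^d$, so for the time being I may drop the range constraint.

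\emph{Restoring genuine parabolicity.} The principal term $\delta P(U)\partial_x^6U$ has symbol $-\delta\xi^6P(U)$, which is dissipative tangentially but degenerate in the normal directions. To remove this degeneracy I would use the constraint $U(x)\in w(N)$: differentiating $(I-P(U))\partial_xU=0$ repeatedly expresses the normal part of each $\partial_x^jU$ ($j\leqslant6$) through the second fundamental form and derivatives of order $<j$. Substituting these identities rewrites the equation, on maps into $w(N)$, in the semilinear form
\begin{equation*}
U_t=\delta\partial_x^6U+G\bigl(U,\partial_xU,\dots,\partial_x^5U\bigr),
\end{equation*}
with constant-coefficient parabolic principal part and with $G$ smooth, bounded with bounded derivatives, arranged so that the full vector field $\delta\partial_x^6U+G$ is tangent to $w(N)$ along $w(N)$ and its restriction reproduces the geometric flow \eqref{equation:pde3}.

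\emph{Local solution.} I would solve this system by a contraction mapping in $C\bigl([0,T_{\varepsilon,\delta}];H^{k+1}(\mathbb{R};\mathbb{R}^d)\bigr)$ applied to the Duhamel formulation
\begin{equation*}
U(t)=e^{\delta t\partial_x^6}U_0+\int_0^te^{\delta(t-s)\partial_x^6}G\bigl(U(s),\dots,\partial_x^5U(s)\bigr)\,ds,
\end{equation*}
using the smoothing estimates $\lVert\partial_x^je^{\delta t\partial_x^6}f\rVert_{L^2}\leqslant C(\delta t)^{-j/6}\lVert f\rVert_{L^2}$ for $0\leqslant j\leqslant5$, whose singularity $(\delta t)^{-5/6}$ is integrable near $t=0$, together with Moser-type product and composition estimates, which are available since $k+1\geqslant5$ makes $H^{k+1}(\mathbb{R})$ a Banach algebra. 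This yields a unique local mild solution, and parabolic bootstrapping promotes it to a classical solution in $C\bigl([0,T_{\varepsilon,\delta}];H^{k+1}\bigr)$. A standard energy estimate exploiting the dissipation of $\delta\partial_x^6$ then shows that the life span and the a priori bounds may be chosen to depend only on $\varepsilon$, $\delta$ and $\lVert u_{0x}\rVert_{H^4(\mathbb{R};TN)}$, since once this low norm is controlled the higher derivatives are dominated by the smoothing. Uniqueness follows from the same contraction, or from a Gronwall estimate on the difference of two solutions. Finally, because $G$ was arranged so that the flow is tangent to $w(N)$ along $w(N)$, the normal component of $U(t,\cdot)$ obeys an energy inequality with vanishing initial data and hence stays zero by Gronwall's inequality; thus $u=w^{-1}\circ U$ solves \eqref{equation:pde3}-\eqref{equation:data3}.

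\emph{Main obstacle.} The genuinely nontrivial point is the degenerate parabolicity of the intrinsically written principal term after embedding, together with the careful bookkeeping of the constraint identities needed to convert $\delta P(U)\partial_x^6U$ into the constant-coefficient operator $\delta\partial_x^6U$ while keeping $w(N)$ invariant. Once this reduction is in place, the remainder is routine semilinear parabolic analysis.
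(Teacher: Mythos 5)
Your overall strategy---Nash embedding, rewriting \eqref{equation:pde3} as a sixth-order semilinear system with constant-coefficient principal part $\delta\partial_x^6$ (using the constraint identities to push the normal part of $\partial_x^6U$ into the nonlinearity), solving by Duhamel's formula with the smoothing estimates of $e^{\delta t\partial_x^6}$, and finally showing that the solution stays on $w(N)$---is the same as the paper's. The genuine gap is in the last step, the invariance of $w(N)$, which is exactly where the paper spends almost all of its effort. Your $G$ is an arbitrary (cut-off) extension off $w(N)$, and tangency of $\delta\partial_x^6U+G$ is guaranteed only for maps with values in $w(N)$. Writing $\rho(U)=U-\Pi(U)$ and using $\langle\Pi(U)_t,\rho(U)\rangle=0$, the tangency hypothesis cancels the pairing of $\delta\partial_x^6\Pi(U)+G(\Pi(U),\dots,\partial_x^5\Pi(U))$ with $\rho(U)$, but what remains is
\begin{equation*}
\frac{d}{dt}\frac12\int_{\mathbb{R}}\lvert\rho(U)\rvert^2dx
=\delta\int_{\mathbb{R}}\bigl\langle\partial_x^6\rho(U),\rho(U)\bigr\rangle dx
+\int_{\mathbb{R}}\bigl\langle G\bigl(U,\dots,\partial_x^5U\bigr)-G\bigl(\Pi(U),\dots,\partial_x^5\Pi(U)\bigr),\rho(U)\bigr\rangle dx.
\end{equation*}
The first term yields the dissipation $-\delta\lVert\partial_x^3\rho(U)\rVert_{L^2}^2$, but the second is only bounded by $C\int\sum_{j=0}^5\lvert\partial_x^j\rho(U)\rvert\,\lvert\rho(U)\rvert\,dx$, because $G$ depends on five derivatives. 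The contributions with $j=4,5$ can be absorbed neither by the dissipation (which controls only three derivatives of $\rho$) nor by $\lVert\rho(U)\rVert_{L^2}^2$; trying to integrate them by parts instead forces derivatives onto the mean-value coefficients, which involve $\partial_x^6U$ and $\partial_x^7U$ and are not even functions at the regularity $k=4$ allowed by the lemma. The best inequality you actually obtain is of the form $\frac{d}{dt}\lVert\rho\rVert_{L^2}^2\leqslant CK\lVert\rho\rVert_{L^2}$ with $K=\sup_t\sum_{j\leqslant5}\lVert\partial_x^j\rho\rVert_{L^2}$ finite but not small, and the differential inequality $y'\leqslant CKy^{1/2}$, $y(0)=0$, admits the nonzero solution $y(t)=(CKt/2)^2$, so Gronwall gives nothing. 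In short, tangency of the full right-hand side along $w(N)$ does not imply invariance for a PDE whose principal part is itself not tangent; the argument must make the error term vanish, or be quadratic in $\rho$ and its first three derivatives only.

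The paper's device repairs exactly this point: instead of extending the nonlinearity by a cut-off, it evaluates the nonlinearity at the \emph{projected} map, i.e.\ it solves \eqref{equation:inteqn2}, whose differential form is $v_t=\delta\partial_x^6v+\tilde F\bigl(\Pi(v)\bigr)$. Then $\delta\Pi(v)_{xxxxxx}+\tilde F\bigl(\Pi(v)\bigr)$ is precisely $dw$ of the geometric right-hand side at $w^{-1}\bigl(\Pi(v)\bigr)$, hence lies in $T_{\Pi(v)}w(N)$, and the identity collapses to $\langle v_t,\rho(v)\rangle=\delta\langle\rho(v)_{xxxxxx},\rho(v)\rangle$ with no error term at all; you would need to adopt this (or an equivalent) device. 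Two further points need repair. First, $U=w(u)$ does not decay at infinity, so it is not an element of $H^{k+1}(\mathbb{R};\mathbb{R}^d)$ and the contraction cannot be run in $C\bigl([0,T];H^{k+1}(\mathbb{R};\mathbb{R}^d)\bigr)$; the correct class is the paper's \eqref{equation:smoothness1}: $U$ bounded with values in the tubular neighborhood and $U_x\in C\bigl([0,T];H^k(\mathbb{R};\mathbb{R}^d)\bigr)$. Second, in that class $\rho(v)$ is merely bounded and continuous, not a priori in $L^2(\mathbb{R})$, so even the corrected energy identity cannot be integrated over $\mathbb{R}$ directly; the paper inserts the Gaussian weight $e^{-\eta x^2}$, proves $\int e^{-\eta x^2}\lvert\rho(v)(t)\rvert^2dx\leqslant C_0\delta\eta^{1/2}t$, and lets $\eta\downarrow0$ by monotone convergence. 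Unless you separately prove $U(t)-U_0\in L^2(\mathbb{R};\mathbb{R}^d)$ from the Duhamel formula (possible, but an additional argument you did not give), this regularization step is also needed.
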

To prove Lemma~\ref{theorem:parabolic2}, 
we embed \eqref{equation:pde3}-\eqref{equation:data3} 
in an appropriate Euclidean space $\mathbb{R}^d$. 
Let $w \in C^\infty(N;\mathbb{R}^d)$ be the Nash isometric embedding, 
and let $w_r(N)$ be a tubular neighborhood of $w(N)$ with sufficiently small $r>0$ 
defined by 
$$
w_r(N)
=
\{
v_0+v_1 \in \mathbb{R}^d 
\ \vert \ 
v_0 \in w(N),\ 
v_1 \in T_{v_0}w(N)^\perp,\ 
\lvert{v_1}\rvert < r\},
$$
where 
$T_{v_0}w(N)^\perp$ is the orthogonal complement of $T_{v_0}w(N)$ 
in $T_{v_0}\mathbb{R}^d\simeq\mathbb{R}^d$. 
The mapping 
$$
\Pi: w_r(N) \ni v_0+v_1 \mapsto v_0 \in w(N)
$$
is the natural projection. 
Set $\rho(v)=v-\Pi(v)$ for $v \in w_r(N)$. 
\par
Let $u$ be a solution to \eqref{equation:pde3}-\eqref{equation:data3}. 
Set $v=w(u)$ for short. Then we have 
\begin{align}
  v_t
& =
  dw(u_t)
  =
  dw
  \Bigl(
  \delta\nabla_x^5u_x
  +
  \bigl(-\varepsilon+aJ(u)\bigr)\nabla_x^3u_x
  +
  \dotsb
  \Bigr)
\nonumber
\\
& =
  \delta v_{xxxxxx}+F(v,v_x,v_{xx},v_{xxx},v_{xxxx},v_{xxxxx}), 
\label{equation:pde4}
\end{align}
where $F \in C^\infty(\mathbb{R}^{6d};\mathbb{R}^d)$ is an appropriate function. 
Set $\tilde{F}(v)=F(v,v_x,v_{xx},v_{xxx},v_{xxxx},v_{xxxxx})$ for short. 
The initial value problem for \eqref{equation:pde4} with 
$v(0,x)=w\bigl(u_0(x)\bigr)$ is equivalent to an integral equation of the form 
\begin{equation}
v(t)
=
\exp\left(t\delta\frac{\partial^6}{\partial x^6}\right)w(u_0)
+
\int_0^t
\exp\left((t-s)\delta\frac{\partial^6}{\partial x^6}\right)
\tilde{F}\bigl(v(s)\bigr)
ds.
\label{equation:inteqn1}
\end{equation}
We shall solve \eqref{equation:inteqn1} without considering the range of $v$, that is, 
we shall deal with 
\begin{equation}
v(t)
=
\exp\left(t\delta\frac{\partial^6}{\partial x^6}\right)w(u_0)
+
\int_0^t
\exp\left((t-s)\delta\frac{\partial^6}{\partial x^6}\right)
\tilde{F}\Bigl(\Pi\bigl(v(s)\bigr)\Bigr)
ds. 
\label{equation:inteqn2}
\end{equation}
If a solution $v$ to \eqref{equation:inteqn2} satisfies $\Pi(v)=v$, 
that is, $\rho(v)=0$, then $v$ solves \eqref{equation:inteqn1} 
and $u=w^{-1}(v)$ is a solution to \eqref{equation:pde3}-\eqref{equation:data3}. 
Note that for any $\alpha>0$, there exists a constant $C_{\delta,\alpha}>0$ 
depending only on $\delta$  and $\alpha$ such that 
$$
\left
\lVert
\exp\left(t\delta\frac{\partial^6}{\partial x^6}\right)
f
\right
\rVert_{H^{6-\alpha}(\mathbb{R})}
\leqslant
\frac{C_{\delta,\alpha}}{t^{(6-\alpha)/6}}
\lVert{f}\rVert_{L^2(\mathbb{R})},
\quad
t>0.
$$
Combining this and the contraction mapping theorem, we can prove the following. 
\begin{lemma}
\label{theorem:parabolic3}
Let $k$ be an integer not smaller than four. 
For any initial mapping $u_0 \in H^{k+1}(\mathbb{R};TN)$, 
there exists a positive number $T_{\varepsilon,\delta}$ depending only on 
$\varepsilon>0$, $\delta>0$ and $\lVert{u_{0x}}\rVert_{H^4(\mathbb{R};TN)}$ 
such that the integral equation \eqref{equation:inteqn2} 
has a unique solution $v$ satisfying 
\begin{equation}
v 
\in 
C
\bigl(
[0,T_{\varepsilon,\delta}]\times\mathbb{R};w_r(N)
\bigr),
\quad
v_x 
\in 
C
\bigl(
[0,T_{\varepsilon,\delta}];
H^k(\mathbb{R};\mathbb{R}^d)
\bigr).
\label{equation:smoothness1}
\end{equation}
\end{lemma}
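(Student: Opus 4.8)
The plan is to solve the integral equation \eqref{equation:inteqn2} by the contraction mapping principle, working entirely in the Euclidean space $\mathbb{R}^d$ into which $N$ has been isometrically embedded. First I would fix $\varepsilon>0$ and $\delta>0$ and introduce the complete metric space in which to run the fixed-point argument. The natural choice is a closed ball
\begin{equation*}
X_{T,M}
=
\Bigl\{
v \in C\bigl([0,T];\mathbb{R}^d\bigr)
\ \Big\vert\
v_x \in C\bigl([0,T];H^k(\mathbb{R};\mathbb{R}^d)\bigr),\
\sup_{0\leqslant t\leqslant T}\lVert v_x(t)\rVert_{H^k(\mathbb{R};\mathbb{R}^d)}\leqslant M
\Bigr\},
\end{equation*}
for suitable constants $T=T_{\varepsilon,\delta}$ and $M$ to be chosen, equipped with the metric inherited from $\sup_{t}\lVert\,\cdot\,\rVert_{H^k}$ on the derivatives together with control of $v$ itself so that $v(t,x)$ stays inside the tubular neighborhood $w_r(N)$. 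Denote by $\mathcal{T}$ the map sending $v$ to the right-hand side of \eqref{equation:inteqn2}. The goal is to show that for $T$ small depending only on $\varepsilon$, $\delta$ and $\lVert u_{0x}\rVert_{H^4}$, the map $\mathcal{T}$ is a contraction of $X_{T,M}$ into itself.

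Next I would establish the two smoothing estimates for the semigroup that drive the argument. The displayed inequality just before the statement gives the gain of $6-\alpha$ spatial derivatives at the expense of the integrable-in-time singularity $t^{-(6-\alpha)/6}$; together with the trivial $L^2$-contractivity of $\exp(t\delta\partial_x^6)$ this lets me bound $\mathcal{T}v$ in $X_{T,M}$. The key point is that $\tilde F(\Pi(v))$, by \eqref{equation:pde4}, involves at most five spatial derivatives of $v$ through a smooth nonlinearity, and that precomposing with the smooth projection $\Pi$ (which is globally defined and has bounded derivatives of all orders on $w_r(N)$) keeps the nonlinearity Lipschitz on bounded sets. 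Thus $\tilde F(\Pi(v(s)))$ lives in $L^2$ with norm controlled by a polynomial in $\lVert v_x(s)\rVert_{H^k}$ via the Sobolev embedding $H^1(\mathbb{R})\subset L^\infty(\mathbb{R})$ and the Moser-type product and composition estimates, using $k\geqslant 4$ so that the top derivative $v_{xxxxx}$ is estimated in $L^2$ and all lower-order factors in $L^\infty$. Applying the semigroup bound with a choice of $\alpha\in(0,1)$ then recovers $k$ derivatives of the Duhamel term with an integrable singularity, and the resulting time integral is $O(T^{\kappa})$ for some $\kappa>0$, which I would make small by shrinking $T$.

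To close the self-mapping property I would first note that the linear term $\exp(t\delta\partial_x^6)w(u_0)$ has $H^k$-norm of its derivative bounded uniformly in $t\in[0,T]$ by $\lVert(w(u_0))_x\rVert_{H^k}$, which is finite since $u_0\in H^{k+1}(\mathbb{R};TN)$; choosing $M$ to be, say, twice this quantity and then $T$ small forces $\mathcal{T}(X_{T,M})\subset X_{T,M}$. I must also verify that the free evolution of $w(u_0)$ stays within $w_r(N)$ in the sup norm for short time, which follows from continuity in $t$ and the fact that $v(0)=w(u_0)\in w(N)$ lies at positive distance $r$ from the boundary of the tube; the Duhamel correction is $O(T^{\kappa})$ in $C([0,T]\times\mathbb{R})$ by the same smoothing estimate with $\alpha$ chosen so that $6-\alpha>1/2$, hence small. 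The contraction estimate is entirely parallel: the difference $\tilde F(\Pi(v_1))-\tilde F(\Pi(v_2))$ is Lipschitz in the $H^k$-norm of the derivatives on the ball $X_{T,M}$, and the same $T^{\kappa}$ factor gives a contraction constant below one. Banach's fixed-point theorem then produces the unique solution with the regularity \eqref{equation:smoothness1}, and the continuity $v_x\in C([0,T];H^k)$ follows from the continuity of each term of \eqref{equation:inteqn2} in that topology.

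The main obstacle I anticipate is not the abstract fixed-point scheme but the careful bookkeeping of the nonlinearity $\tilde F$: one must confirm that the composition $\tilde F(\Pi(v))$, built from the Christoffel symbols of $(N,J,g)$, the complex structure $J$, and the second fundamental form of the embedding $w$, genuinely produces a function that is smooth in its arguments and whose $L^2$-norm and Lipschitz constant are controlled by $\lVert v_x\rVert_{H^k}$ with the derivative count never exceeding $k+1$ after differentiating the free-variable $v$ up to the order implicit in \eqref{equation:pde4}. Precomposition with $\Pi$ is the device that makes $\tilde F$ globally defined and bounded despite $N$ being only a submanifold, so verifying that $\Pi$ and $\rho$ are smooth with bounded derivatives on $w_r(N)$ — which holds for $r>0$ small by the tubular neighborhood theorem — is the technical linchpin. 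Once these composition and product estimates are in place, the role of $k\geqslant 4$ becomes transparent: it is exactly the threshold at which the five spatial derivatives appearing in $F$ can be absorbed while keeping every factor bounded in $L^\infty$ via $H^1\subset L^\infty$.
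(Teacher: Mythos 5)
Your overall strategy --- a contraction mapping argument for \eqref{equation:inteqn2} in a ball of curves with $v_x \in C([0,T];H^k(\mathbb{R};\mathbb{R}^d))$, driven by the smoothing estimate for $\exp(t\delta\partial_x^6)$, with the projection $\Pi$ making the nonlinearity globally defined on the tube $w_r(N)$ --- is exactly the route the paper takes (the paper states the smoothing estimate, invokes the contraction mapping theorem, and omits all further detail). However, your execution of the key smoothing step has a genuine derivative-counting gap. You estimate $\tilde{F}(\Pi(v(s)))$ only in $L^2$ and then claim that the semigroup bound with $\alpha\in(0,1)$ ``recovers $k$ derivatives of the Duhamel term with an integrable singularity.'' This is impossible once $k\geqslant 5$ (in particular for the case $k\geqslant 6$ required by Theorem~\ref{theorem:main}): the sixth-order semigroup gains at most $6-\alpha<6$ derivatives at the integrable cost $t^{-(6-\alpha)/6}$, and a gain of the full $6$ derivatives carries the non-integrable singularity $t^{-1}$. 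Starting from $\tilde{F}\in L^2$, the Duhamel term can only be placed in $H^{6-\alpha}$, which does not contain the required space $H^{k+1}$ (equivalently $v_x\in H^k$) when $k+1\geqslant 6$. The repair is standard but must be stated: since $v_x\in H^k$ and $\tilde{F}$ involves at most five derivatives of $v$, the nonlinearity actually lies in $H^{k-4}$, with
\begin{equation*}
\bigl\lVert \tilde{F}\bigl(\Pi(v)\bigr)\bigr\rVert_{H^{k-4}}
\leqslant
P\bigl(\lVert v_x\rVert_{H^k}\bigr),
\end{equation*}
obtained by Leibniz/Moser estimates putting the single top-order factor $\partial_x^{k}v_x\in L^2$ and every other factor in $L^\infty$ via $H^1(\mathbb{R})\subset L^\infty(\mathbb{R})$. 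Since the semigroup is a Fourier multiplier and hence commutes with $\partial_x$, the displayed smoothing estimate upgrades to $\lVert \exp(t\delta\partial_x^6)f\rVert_{H^{s+6-\alpha}}\leqslant C_{\delta,\alpha}t^{-(6-\alpha)/6}\lVert f\rVert_{H^{s}}$; applying it with $s=k-4$ gains the five-and-a-bit derivatives needed to land $v_x$ in $H^k$ with an integrable singularity. The same correction is needed in your Lipschitz estimate for differences, which must likewise be run in $H^{k-4}$, not $L^2$.

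A second, smaller mismatch: Lemma~\ref{theorem:parabolic3} asserts that $T_{\varepsilon,\delta}$ depends only on $\varepsilon$, $\delta$ and $\lVert u_{0x}\rVert_{H^4(\mathbb{R};TN)}$, whereas your choice of radius $M\sim\lVert (w(u_0))_x\rVert_{H^k}$ forces the contraction time to depend on the $H^k$-norm of the data. To obtain the stated dependence you should run the fixed-point argument at the lowest level $k=4$ (so $T$ depends only on $\varepsilon$, $\delta$, $\lVert u_{0x}\rVert_{H^4}$) and then propagate the higher regularity on that same time interval, using that the equation for the top-order derivatives is linear in the highest norm with coefficients controlled by the $H^5$-level quantities already bounded; alternatively, structure the self-mapping estimate so that the smallness of $T$ is dictated only by the $H^4$-level part of the norm. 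Without one of these devices your argument proves a weaker statement than the lemma claims.
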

We omit the detail of the proof of Lemma~\ref{theorem:parabolic3}. 
Here we will complete the proof of Lemma~\ref{theorem:parabolic2}. 
\begin{proof}[Proof of Lemma~\ref{theorem:parabolic2}] 
Let $v$ be a unique solution to \eqref{equation:inteqn2} in Lemma~\ref{theorem:parabolic3}. 
We have only to show that $\rho(v)=v-\Pi(v)=0$. 
Since $\Pi \in \mathscr{B}^\infty(\mathbb{R}^d;\mathbb{R}^d)$, 
the smoothness of $v$ given in \eqref{equation:smoothness1} implies that 
\begin{align}
  \rho(v) 
& \in 
  \mathscr{B}\bigl([0,T_{\varepsilon,\delta}]\times\mathbb{R};\mathbb{R}^d\bigr),
\label{equation:bound201}
\\
  \rho(v)_x
  =
  v_x-\frac{\partial \Pi}{\partial v}(v)v_x 
& \in 
  C\bigl([0,T_{\varepsilon,\delta}];H^4(\mathbb{R};\mathbb{R}^d)\bigr),
\label{equation:bound202}
\\
  \rho(v)_{xx}, \rho(v)_{xxx}
& \in 
  C\bigl([0,T_{\varepsilon,\delta}];H^4(\mathbb{R};\mathbb{R}^d)\bigr),
\label{equation:bound203}
\end{align}
where $\mathscr{B}$ denotes the set of all bounded continuous function. 
We shall evaluate 
$\lvert\rho(v)\rvert^2=\langle\rho(v),\rho(v)\rangle$. 
Fix arbitrary $\eta\in(0,1]$. A simple computation gives 
\begin{equation}
\frac{d}{dt}
\frac{1}{2}
\int_{\mathbb{R}}
e^{-\eta x^2}
\lvert\rho(v)\rvert^2
dx
=
\int_{\mathbb{R}}
e^{-\eta x^2}
\langle\rho(v)_t,\rho(v)\rangle
dx.
\label{equation:201} 
\end{equation}
Since 
$$
\Pi(v)_t
=
\frac{\partial \Pi}{\partial v}(v)v_t 
\in 
T_{\Pi(v)}w(N),
\quad
\rho(v) 
\in 
T_{\Pi(v)}w(N)^\perp,
$$
we have 
$$
\langle\Pi(v)_t,\rho(v)\rangle=0,
\quad
\langle\rho(v)_t,\rho(v)\rangle
=
\Bigl\langle
\bigl\{
\rho(v)_t + \Pi(v)_t 
\bigr\},
\rho(v)
\Bigr\rangle
=
\langle{v_t,\rho(v)}\rangle. 
$$ 
Substituting this into \eqref{equation:201}, we get 
\begin{equation}
\frac{d}{dt}
\frac{1}{2}
\int_{\mathbb{R}}
e^{-\eta x^2}
\lvert\rho(v)\rvert^2
dx
=
\int_{\mathbb{R}}
e^{-\eta x^2}
\langle{v_t,\rho(v)}\rangle
dx.
\label{equation:202} 
\end{equation}
Set $u=w^{-1}\bigl(\Pi(v)\bigr)$ for short. 
Here we remark that 
\begin{align}
  v_t
& =
  \delta v_{xxxxxx}
  +
  \tilde{F}\bigl(\Pi(v)\bigr)
\nonumber
\\
& =
  \delta\rho(v)_{xxxxxx}
  +
  \bigl\{
  \delta\Pi(v)_{xxxxxx}
  +
  \tilde{F}\bigl(\Pi(v)\bigr)
  \bigr\}
\nonumber
\\
& =
  \delta\rho(v)_{xxxxxx}
  +
  dw
  \Bigl(
  \delta\nabla_x^5u_x
  +
  \bigl(-\varepsilon+aJ(u)\bigr)
  \nabla_x^3u_x
  +
  \dotsb
  \Bigr), 
\label{equation:203}
\end{align}
and the second term of the right hand side above belongs to 
$T_{\Pi(v)}w(N)$. 
Substituting \eqref{equation:203} into \eqref{equation:202}, we have 
\begin{equation}
\frac{d}{dt}
\frac{1}{2}
\int_{\mathbb{R}}
e^{-\eta x^2}
\lvert\rho(v)\rvert^2
dx
=
\delta
\int_{\mathbb{R}}
e^{-\eta x^2}
\langle{\rho(v)_{xxxxxx},\rho(v)}\rangle
dx.
\label{equation:204} 
\end{equation}
In view of the integration by parts, 
\eqref{equation:204} becomes 
\begin{equation}
\frac{d}{dt}
\frac{1}{2}
\int_{\mathbb{R}}
e^{-\eta x^2}
\lvert\rho(v)\rvert^2
dx
=
-
\delta
\int_{\mathbb{R}}
\Bigl\langle
\rho(v)_{xxx},
\bigl\{
e^{-\eta x^2} \rho(v)
\bigr\}_{xxx}
\Bigr\rangle
dx.
\label{equation:205} 
\end{equation}
Substituting 
\begin{align*}
  \bigl\{
  e^{-\eta x^2} \rho(v)
  \bigr\}_{xxx} 
& =
  e^{-\eta x^2} 
  \rho(v)_{xxx}
  +
  3
  \bigl\{e^{-\eta x^2}\bigr\}_x 
  \rho(v)_{xx}
\\
& +
  3
  \bigl\{e^{-\eta x^2}\bigr\}_{xx} 
  \rho(v)_x
  +
  \bigl\{e^{-\eta x^2}\bigr\}_{xxx} 
  \rho(v), 
\\
  \bigl\{e^{-\eta x^2}\bigr\}_x 
& =
  -2\eta x e^{-\eta x^2},
\\
  \bigl\{e^{-\eta x^2}\bigr\}_{xx} 
& =
  \bigl(
  -2\eta +4\eta^2 x^2
  \bigr)
  e^{-\eta x^2},
\\
  \bigl\{e^{-\eta x^2}\bigr\}_{xxx} 
& =
  \bigl(
  12\eta^2 x -8\eta^3 x^3
  \bigr)
  e^{-\eta x^2}
\end{align*}
into \eqref{equation:205}, we have 
\begin{align}
  \frac{d}{dt}
  \frac{1}{2}
  \int_{\mathbb{R}}
  e^{-\eta x^2}
  \lvert\rho(v)\rvert^2
  dx
& =
  -
  \delta
  \int_{\mathbb{R}}
  e^{-\eta x^2}
  \lvert\rho(v)_{xxx}\rvert^2
  dx
\nonumber
\\
& +
  \delta
  \int_{\mathbb{R}}
  \bigl(
  -12\eta^2 x + 8\eta^3 x^3
  \bigr)
  e^{-\eta x^2}
  \bigl\langle\rho(v)_{xxx},\rho(v)\bigr\rangle
  dx
\label{equation:206} 
\\
& +
  \delta
  \int_{\mathbb{R}}
  \bigl(
  6\eta - 12\eta^2 x^2
  \bigr)
  e^{-\eta x^2}
  \bigl\langle\rho(v)_{xxx},\rho(v)_x\bigr\rangle
  dx
\label{equation:207} 
\\
& +
  \delta
  \int_{\mathbb{R}}
  6\eta x
  e^{-\eta x^2}
  \bigl\langle\rho(v)_{xxx},\rho(v)_{xx}\bigr\rangle
  dx.
\label{equation:208} 
\end{align}
Here we recall the smoothness of $v$ as in 
\eqref{equation:bound201}, 
\eqref{equation:bound202} 
and 
\eqref{equation:bound203}. 
Combining $\eta \in (0,1]$, 
the Schwarz inequality for the integration on $\mathbb{R}$ 
and the change of variable $y=\eta^{1/2}x$, we deduce that 
\begin{align}
  \lvert\text{\eqref{equation:206}}\rvert
& \leqslant
  12\delta 
  \int_{\mathbb{R}}
  \bigl(
  \eta^2 \lvert{x}\rvert + \eta^3 \lvert{x}\rvert^3
  \bigr)
  e^{-\eta x^2}
  \lvert\rho(v)_{xxx}\rvert
  \lvert\rho(v)\rvert
  dx
\nonumber
\\
& \leqslant
  C\delta\eta^{3/2}
  \int_{\mathbb{R}}
  \bigl(
  \eta^{1/2}\lvert{x}\rvert + \eta^{3/2}\lvert{x}\rvert^3
  \bigr)
  e^{-\eta x^2}
  \lvert\rho(v)_{xxx}\rvert
  dx
\nonumber
\\
& \leqslant
  C\delta\eta^{3/2}
  \int_{\mathbb{R}}
  \bigl(
  1+\eta x^2
  \bigr)^{3/2}
  e^{-\eta x^2}
  \lvert\rho(v)_{xxx}\rvert
  dx
\nonumber
\\
& \leqslant
  C\delta\eta^{3/2}
  \left\{
  \int_{\mathbb{R}}
  (1+\eta x^2)^3
  e^{-2\eta x^2}
  dx
  \right\}^{1/2}
\nonumber
\\
& \leqslant
  C\delta\eta^{5/4}
  \left\{
  \int_{\mathbb{R}}
  (1+y^2)^3
  e^{-2y^2}
  dy
  \right\}^{1/2}
\nonumber
\\
& =
  C\delta\eta^{5/4},
\label{equation:209}  
\\
  \lvert\text{\eqref{equation:207}}\rvert
& \leqslant
  12\delta\eta
  \int_{\mathbb{R}}
  \bigl(
  1+\eta x^2
  \bigr)
  e^{-\eta x^2}
  \lvert\rho(v)_{xxx}\rvert
  \lvert\rho(v)_{x}\rvert
  dx
\nonumber
\\
& \leqslant
  12\delta\eta
  \left\{
  \sup_{y\in\mathbb{R}}
  (1+y^2)e^{-y^2}
  \right\}
  \int_{\mathbb{R}}
  \lvert\rho(v)_{xxx}\rvert
  \lvert\rho(v)_{x}\rvert
  dx
\nonumber
\\
& =
  C\delta\eta
\label{equation:210}  
\\
  \lvert\text{\eqref{equation:208}}\rvert
& \leqslant
  6\delta\eta^{1/2}
  \int_{\mathbb{R}}
  \bigl(
  \eta x^2
  \bigr)^{1/2}
  e^{-\eta x^2}
  \lvert\rho(v)_{xxx}\rvert
  \lvert\rho(v)_{xx}\rvert
  dx
\nonumber
\\
& \leqslant
  6\delta\eta^{1/2}
  \left\{
  \sup_{y\in\mathbb{R}}
  \lvert{y}\rvert e^{-y^2}
  \right\}
  \int_{\mathbb{R}}
  \lvert\rho(v)_{xxx}\rvert
  \lvert\rho(v)_{xx}\rvert
  dx
\nonumber
\\
& =
  C\delta\eta^{1/2},
\label{equation:211} 
\end{align}
where $C$ is a positive constant which is independent of $\eta$. 
Combining 
\eqref{equation:206}, 
\eqref{equation:207},  
\eqref{equation:208}, 
\eqref{equation:209},  
\eqref{equation:210} 
and 
\eqref{equation:211}, 
we deduce that there exists a positive constant $C_0$ which is independent of $\eta$ such that 
$$
\frac{d}{dt} 
\int_{\mathbb{R}} 
e^{-\eta x^2} 
\lvert\rho(v)\rvert^2 
dx
\leqslant 
C_0\delta\eta^{1/2},
\quad
\eta \in (0,1], 
\quad
t \in [0,T_{\varepsilon,\delta}].
$$
Since $\rho\bigl(v(0,\cdot)\bigr)=\rho\bigl(w(u_0)\bigr)=0$, 
we have 
$$
\int_{\mathbb{R}} 
e^{-\eta x^2} 
\bigl\lvert\rho\bigl(v(t,x)\bigr)\bigr\rvert^2 
dx
\leqslant
C_0\delta\eta^{1/2}t,
\quad
t \in [0,T_{\varepsilon,\delta}].
$$
Applying the Beppo-Levi theorem to this, we deduce that 
for any fixed $t \in [0,T_{\varepsilon,\delta}]$ 
$$
0
\leqslant
\int_{\mathbb{R}} 
\bigl\lvert\rho\bigl(v(t,x)\bigr)\bigr\rvert^2 
dx
=
\lim_{\eta \downarrow 0}
\int_{\mathbb{R}} 
e^{-\eta x^2} 
\bigl\lvert\rho\bigl(v(t,x)\bigr)\bigr\rvert^2 
dx
=
0. 
$$
This implies that $\rho\bigl(v(t,\cdot)\bigr)$ belongs to $L^2(\mathbb{R};\mathbb{R}^d)$ 
and  $\rho\bigl(v(t,x)\bigr)=0$ a.e. $x \in \mathbb{R}$ for any 
$t \in [0,T_{\varepsilon,\delta}]$. 
Thus we deduce that 
$\rho\bigl(v(t,x)\bigr)=0$ for any 
$(t,x) \in [0,T_{\varepsilon.\delta}]\times\mathbb{R}$ 
since 
$v \in C\bigl([0,T_{\varepsilon.\delta}]\times\mathbb{R};\mathbb{R}^d\bigr)$.  
\end{proof}
We will conclude the present section with the proof of Lemma~\ref{theorem:parabolic1}. 
\begin{proof}[Proof of Lemma~\ref{theorem:parabolic1}]
Let $\{u^\delta\}_{\delta\in(0,1]}$ be a sequence of solutions to 
\eqref{equation:pde3}-\eqref{equation:data3} with a fixed parameter $\varepsilon$. 
We shall show that there exists $T_\varepsilon$ depending only on 
$\varepsilon$ and $\lVert{u_{0x}}\rVert_{H^4(\mathbb{R};TN)}$ 
such that  $\{u^\delta\}_{\delta\in(0,1]}$ is bounded in the function space 
$ L^\infty\bigl(0,T_\varepsilon;H^{k+1}(\mathbb{R};TN)\bigr)$. 
If this is true, then the standard compact arguments imply that 
there exists a mapping $u$ such that 
$u$ satisfies 
$$
u 
\in 
C\bigl([0,T_\varepsilon];H^k(\mathbb{R};TN)\bigr)
\bigcap 
L^\infty\bigl(0,T_\varepsilon;H^{k+1}(\mathbb{R};TN)\bigr)
$$
and solves 
\eqref{equation:pde2}-\eqref{equation:data2} 
provided that $\delta \downarrow 0$. 
The key of this uniform estimates is also the smoothing property of the parabolic operator 
$\partial/\partial t + \varepsilon\partial^4/\partial x^4$. 
By using this property, we can also prove the uniqueness and the continuity in the time variable, 
but we omit the detail.    
\par
We abbreviate $J(u^\delta)$ and $g_{u^\delta}(\cdot,\cdot)$ by 
$J$ and $g(\cdot,\cdot)$ respectively. 
We will evaluate 
$$
\sum_{l=0}^k
\int_{\mathbb{R}}
g\bigl(\nabla_x^lu^\delta_x,\nabla_x^lu^\delta_x\bigr)
dx.
$$
We apply $\nabla_x^{l+1}$, $l=0,1,2,\dotsc,k$ to 
\begin{align}
  u^\delta_t
& =
  \delta\nabla_x^5u^\delta_x
  +
  \bigl(-\varepsilon+aJ\bigr)\nabla_x^3u^\delta_x
&
&
\nonumber
\\
& +
  \{1+bg(u^\delta_x,u^\delta_x)\}J\nabla_xu^\delta_x
  +
  cg(\nabla_xu^\delta_x,u^\delta_x)Ju^\delta_x
& \quad
& \text{in}
  \quad
  \mathbb(0,T_{\varepsilon,\delta})\times\mathbb{R}. 
\label{equation:pdedelta} 
\end{align}
We compute this term by term. 
Let $R$ be the Riemann curvature tensor of $(N,J,g)$. 
Note that for the left hand side of \eqref{equation:pdedelta}, 
\begin{align}
  \nabla_t\bigl(u^\delta_x\bigr)
& =
  \nabla_xu^\delta_t,
\label{equation:3lhs1}
\\
  \nabla_t
  \bigl(\nabla_x^lu^\delta_x\bigr) 
& =
  \nabla_x^{l+1}u^\delta_t
  +
  \sum_{m=0}^{l-1}
  \nabla_x^{l-1-m}
  \Bigl\{
  R(u^\delta_t,u^\delta_x)\nabla_x^mu^\delta_x
  \Bigr\},
  \quad
  l=1,2,3,\dotsc,
\label{equation:3lhs2}
\end{align}
\begin{align}
& \sum_{m=0}^{l-1}
  \nabla_x^{l-1-m}
  \Bigl\{
  R(u^\delta_t,u^\delta_x)\nabla_x^mu^\delta_x
  \Bigr\}
\nonumber
\\
  =
& \delta
  \sum_{m=0}^{l-1}
  \nabla_x^{l-1-m}
  \Bigl\{
  R\bigl(\nabla_x^5 u^\delta_x,u^\delta_x\bigr)
  \nabla_x^mu^\delta_x
  \Bigr\}
\nonumber
\\
  -
& \varepsilon
  \sum_{m=0}^{l-1}
  \nabla_x^{l-1-m}
  \Bigl\{
  R\bigl(\nabla_x^3 u^\delta_x,u^\delta_x\bigr)
  \nabla_x^mu^\delta_x
  \Bigr\}
\nonumber
\\
  +
& a
  \sum_{m=0}^{l-1}
  \nabla_x^{l-1-m}
  \Bigl\{
  R\bigl(J(u^\delta) \nabla_x^3 u^\delta_x,u^\delta_x\bigr)
  \nabla_x^mu^\delta_x
  \Bigr\}
\nonumber
\\
  +
& \sum_{m=0}^{l-1}
  \nabla_x^{l-1-m}
  \Bigl\{
  \bigl(1+bg(u^\delta_x, u^\delta_x)\bigr)
  R\bigl(J(u^\delta) \nabla_x u^\delta,u^\delta_x\bigr)
  \nabla_x^mu^\delta_x
  \Bigr\}
\nonumber
\\
  +
& c  
  \sum_{m=0}^{l-1}
  \nabla_x^{l-1-m}
  \Bigl\{
  g\bigl(\nabla_x u^\delta_x, u^\delta_x\bigr)
  R\bigl(J(u^\delta) \nabla_x u^\delta,u^\delta_x\bigr)
  \nabla_x^mu^\delta_x
  \Bigr\}
\nonumber
\\
  =
& \delta
  \nabla_x
  \Bigl\{
  R\bigl(\nabla_x^{l+3} u^\delta_x,u^\delta_x\bigr)u^\delta_x
  \Bigr\}
\nonumber
\\
  +
& \delta 
  \mathcal{O}
  \Bigl(
  g\bigl(\nabla_x^{l+3} u^\delta_x, \nabla_x^{l+3} u^\delta_x\bigr)^{1/2}
  \Bigr)
\nonumber
\\
  +
& \mathcal{O}
  \left(
  \sum_{\alpha=1}^2
  g\bigl(\nabla_x^{l+\alpha} u^\delta_x, \nabla_x^{l+\alpha} u^\delta_x\bigr)^{1/2}
  \right)
\nonumber
\\
  +
& \mathcal{O}
  \left(
  \sum_{m=0}^l
  g\bigl(\nabla_x^m u^\delta_x, \nabla_x^m u^\delta_x\bigr)^{1/2}
  \right).
\label{equation:3004}
\end{align}
Here we used the Sobolev embedding. 
On the other hand, we have 
\begin{align}
  \nabla_x^{l+1}
  u^\delta_t
& =
  \delta\nabla_x^6
  \bigl(\nabla_x^lu^\delta_x\bigr)
  +
  \bigl(-\varepsilon+aJ(u^\delta)\bigr)
  \bigl(\nabla_x^lu^\delta_x\bigr)
  +
  J(u^\delta)\nabla_x^2
  \bigl(\nabla_x^lu^\delta_x\bigr)
\nonumber
\\
& +
  b
  \sum_{\mu+\nu=0}^{l+1}
  \frac{(l+1)!}{\mu!\nu!(l+1-\mu-\nu)!}
  g\bigl(\nabla_x^\mu u^\delta_x, \nabla_x^\nu u^\delta_x\bigr)
  J(u^\delta)
  \nabla_x^{l+2-\mu-\nu}u^\delta_x
\nonumber
\\
& +
  c
  \sum_{\mu+\nu=0}^{l+1}
  \frac{(l+1)!}{\mu!\nu!(l+1-\mu-\nu)!}
  g\bigl(\nabla_x^{\mu+1} u^\delta_x, \nabla_x^\nu u^\delta_x\bigr)
  J(u^\delta)
  \nabla_x^{l+1-\mu-\nu}u^\delta_x
\nonumber
\\ 
& =
  \delta\nabla_x^6
  \bigl(\nabla_x^lu^\delta_x\bigr)
  +
  \bigl(-\varepsilon+aJ(u^\delta)\bigr)
  \bigl(\nabla_x^lu^\delta_x\bigr)
  +
  J(u^\delta)\nabla_x^2
  \bigl(\nabla_x^lu^\delta_x\bigr)
  +
  bQ_1+cQ_2.
\label{equation:3001}
\end{align}
We modify  $Q_1$ and $Q_2$ a little for the sake of convenience for our energy estimates: 
\begin{align}
  Q_1
& =
  \sum_{\mu+\nu=0}^{l+1}
  \frac{(l+1)!}{\mu!\nu!(l+1-\mu-\nu)!}
  g\bigl(\nabla_x^\mu u^\delta_x, \nabla_x^\nu u^\delta_x\bigr)
  J(u^\delta)
  \nabla_x^{l+2-\mu-\nu}u^\delta_x
\nonumber
\\
& =
  g(u^\delta_x,u^\delta_x)
  J(u^\delta)
  \nabla_x^2
  \bigl(\nabla_x^lu^\delta_x\bigr)
\nonumber
\\
& +
  2
  g\Bigl(\nabla_x\bigl(\nabla_x^lu^\delta_x\bigr),u^\delta_x\Bigr)
  J(u^\delta)\nabla_xu^\delta_x
\nonumber
\\
& +
  2(l+1)
  g\bigl(\nabla_xu^\delta_x,u^\delta_x\bigr)
  J(u^\delta)\nabla_x\bigl(\nabla_x^lu^\delta\bigr)
\nonumber
\\
& +
  \sum_{\substack{\mu+\nu=2 \\ \mu, \nu \leqslant l}}^{l+1}
  \frac{(l+1)!}{\mu!\nu!(l+1-\mu-\nu)!}
  g\bigl(\nabla_x^\mu u^\delta_x, \nabla_x^\nu u^\delta_x\bigr)
  J(u^\delta)
  \nabla_x^{l+2-\mu-\nu}u^\delta_x
\nonumber    
\\
& =
  \nabla_x
  \Bigl\{
  g(u^\delta_x,u^\delta_x)
  J(u^\delta)
  \nabla_x
  \bigl(\nabla_x^lu^\delta_x\bigr)
  \Bigr\}
\nonumber
\\
& +
  2
  g\Bigl(\nabla_x\bigl(\nabla_x^lu^\delta_x\bigr),u^\delta_x\Bigr)
  J(u^\delta)\nabla_xu^\delta_x
\nonumber
\\
& +
  2l
  g\bigl(\nabla_xu^\delta_x,u^\delta_x\bigr)
  J(u^\delta)\nabla_x\bigl(\nabla_x^lu^\delta\bigr)
\nonumber
\\
& +
  \sum_{\substack{\mu+\nu=2 \\ \mu, \nu \leqslant l}}^{l+1}
  \frac{(l+1)!}{\mu!\nu!(l+1-\mu-\nu)!}
  g\bigl(\nabla_x^\mu u^\delta_x, \nabla_x^\nu u^\delta_x\bigr)
  J(u^\delta)
  \nabla_x^{l+2-\mu-\nu}u^\delta_x,
\label{equation:3002}
\\
  Q_2
& =
  \sum_{\mu+\nu=0}^{l+1}
  \frac{(l+1)!}{\mu!\nu!(l+1-\mu-\nu)!}
  g\bigl(\nabla_x^{\mu+1} u^\delta_x, \nabla_x^\nu u^\delta_x\bigr)
  J(u^\delta)
  \nabla_x^{l+1-\mu-\nu}u^\delta_x
\nonumber
\\
& =
  g
  \bigl(\nabla_x u^\delta_x, u^\delta_x\bigr)
  J(u^\delta)
  \nabla_x
  \bigl(\nabla_x^lu^\delta_x\bigr)
\nonumber
\\
& +
  g\Bigl(\nabla_x^2\bigl(\nabla_x^lu^\delta_x\bigr), u^\delta_x\Bigr)
  J(u^\delta)
  u^\delta_x
\nonumber
\\
& +
  (l+1)
  g\Bigl(\nabla_x\bigl(\nabla_x^lu^\delta_x\bigr), \nabla_xu^\delta_x\Bigr)
  J(u^\delta)
  u^\delta_x
\nonumber
\\
& +
  (l+1)
  g\Bigl(\nabla_x\bigl(\nabla_x^lu^\delta_x\bigr), u^\delta_x\Bigr)
  J(u^\delta)
  \nabla_xu^\delta_x
\nonumber
\\
& +
  g\Bigl(\nabla_xu^\delta_x, \nabla_x\bigl(\nabla_x^lu^\delta_x\bigr)\Bigr)
  J(u^\delta)
  u^\delta_x
\nonumber
\\
& +
  \sum_{\substack{\mu+\nu+\rho=l+1 \\ \mu \leqslant l-1, \ \nu, \rho \leqslant l}}
  \frac{(l+1)!}{\mu!\nu!\rho!}
  g\bigl(\nabla_x^{\mu+1} u^\delta_x,\nabla_x^\nu u^\delta_x\bigr)
  J(u^\delta)
  \nabla_x^\rho 
  u^\delta_x
\nonumber
\\
& =
  \nabla_x
  \Bigl\{
  g\Bigl(\nabla_x\bigl(\nabla_x^l u^\delta_x\bigr), u^\delta_x\Bigr)
  J(u^\delta)
  u^\delta_x
  \Bigr\}
\nonumber
\\
& +
  g
  \bigl(\nabla_x u^\delta_x, u^\delta_x\bigr)
  J(u^\delta)
  \nabla_x
  \bigl(\nabla_x^lu^\delta_x\bigr)
\nonumber
\\
& +
  (l+1)
  g\Bigl(\nabla_x\bigl(\nabla_x^lu^\delta_x\bigr), \nabla_xu^\delta_x\Bigr)
  J(u^\delta)
  u^\delta_x
\nonumber
\\
& +
  l
  g\Bigl(\nabla_x\bigl(\nabla_x^lu^\delta_x\bigr), u^\delta_x\Bigr)
  J(u^\delta)
  \nabla_xu^\delta_x
\nonumber
\\
& +
  \sum_{\substack{\mu+\nu+\rho=l+1 \\ \mu \leqslant l-1, \ \nu, \rho \leqslant l}}
  \frac{(l+1)!}{\mu!\nu!\rho!}
  g\bigl(\nabla_x^{\mu+1} u^\delta_x,\nabla_x^\nu u^\delta_x\bigr)
  J(u^\delta)
  \nabla_x^\rho 
  u^\delta_x.
\label{equation:3003}
\end{align}
Combining 
\eqref{equation:3lhs1}, 
\eqref{equation:3lhs2}, 
\eqref{equation:3004}, 
\eqref{equation:3001}, 
\eqref{equation:3002} 
and  
\eqref{equation:3003}, 
we have 
\begin{align}
  \nabla_t
  \bigl(\nabla_x^l u^\delta_x\bigr)
& =
  \delta
  \nabla_x^6
  \bigl(\nabla_x^l u^\delta_x\bigr)
  +
  \bigl(-\varepsilon+J(u^\delta)\bigr)
  \nabla_x^4
  \bigl(\nabla_x^l u^\delta_x\bigr)
\nonumber
\\
& +
  \delta
  \nabla_x
  \Bigl\{
  R\bigl(\nabla_x^{l+3} u^\delta_x,u^\delta_x\bigr)u^\delta_x
  \Bigr\}
  +
  \delta
  \mathcal{O}
  \Bigl(
  g\bigl(\nabla_x^{l+3} u^\delta_x, \nabla_x^{l+3} u^\delta_x\bigr)^{1/2}
  \Bigr)
\nonumber
\\
& +
  \mathcal{O}
  \left(
  \sum_{\alpha=1}^2
  g\bigl(\nabla_x^{l+\alpha} u^\delta_x, \nabla_x^{l+\alpha} u^\delta_x\bigr)^{1/2}
  \right)
  +
  \mathcal{O}
  \left(
  \sum_{m=0}^l
  g\bigl(\nabla_x^m u^\delta_x, \nabla_x^m u^\delta_x\bigr)^{1/2}
  \right). 
\label{equation:3005}
\end{align}
Now we compute 
$$
\frac{d}{dt}
\frac{1}{2}
\sum_{l=0}^k
\int_{\mathbb{R}}
g\bigl(\nabla_x^l u^\delta_x, \nabla_x^l u^\delta_x\bigr)
dx
=
\sum_{l=0}^k
\int_{\mathbb{R}}
g\bigl(\nabla_t \nabla_x^l u^\delta_x, \nabla_x^l u^\delta_x\bigr)
dx. 
$$
Substitute \eqref{equation:3005} into this. 
Using the integration by parts and the property of $J$, we have 
\begin{align}
& \frac{d}{dt}
  \frac{1}{2}
  \sum_{l=0}^k
  \int_{\mathbb{R}}
  g\bigl(\nabla_x^l u^\delta_x, \nabla_x^l u^\delta_x\bigr)
  dx
\nonumber
\\
  =
& -
  \delta
  \sum_{l=0}^k
  \int_{\mathbb{R}}
  g
  \Bigl(
  \nabla_x^3 \bigl(\nabla_x^l u^\delta_x\bigr), 
  \nabla_x^3 \bigl(\nabla_x^l u^\delta_x\bigr)
  \Bigr)
  dx
\nonumber
\\
& -
  \varepsilon
  \sum_{l=0}^k
  \int_{\mathbb{R}}
  g
  \Bigl(
  \nabla_x^2 \bigl(\nabla_x^l u^\delta_x\bigr), 
  \nabla_x^2 \bigl(\nabla_x^l u^\delta_x\bigr)
  \Bigr)
  dx
\nonumber
\\
& -
  \delta
  \sum_{l=0}^k
  \int_{\mathbb{R}}
  g
  \Bigl(
  R\bigl(\nabla_x^{l+3} u^\delta_x,u^\delta_x\bigr)u^\delta_x, 
  \nabla_x \bigl(\nabla_x^l u^\delta_x\bigr)
  \Bigr)
  dx
\nonumber
\\
& +
  \delta
  \sum_{l=0}^k
  \int_{\mathbb{R}}
  \mathcal{O}
  \Bigl(
  g\bigl(\nabla_x^{l+3} u^\delta_x, \nabla_x^{l+3} u^\delta_x\bigr)^{1/2}
  g\bigl(\nabla_x^l u^\delta_x, \nabla_x^l u^\delta_x\bigr)^{1/2}
  \Bigr)
  dx
\nonumber
\\
& +
  \sum_{l=0}^k
  \int_{\mathbb{R}}
  \mathcal{O}
  \Bigl(
  \sum_{\alpha=1}^2
  g\bigl(\nabla_x^{l+\alpha} u^\delta_x, \nabla_x^{l+\alpha} u^\delta_x\bigr)^{1/2}
  g\bigl(\nabla_x^l u^\delta_x, \nabla_x^l u^\delta_x\bigr)^{1/2}
  \Bigr)
  dx
\nonumber
\\
& +
  \sum_{l=0}^k
  \int_{\mathbb{R}}
  \mathcal{O}
  \Bigl(
  g\bigl(\nabla_x^l u^\delta_x, \nabla_x^l u^\delta_x\bigr)
  \Bigr)
  dx
\nonumber
\\
  =
& -
  \delta
  \sum_{l=0}^k
  \int_{\mathbb{R}}
  g
  \Bigl(
  \nabla_x^3 \bigl(\nabla_x^l u^\delta_x\bigr), 
  \nabla_x^3 \bigl(\nabla_x^l u^\delta_x\bigr)
  \Bigr)
  dx
\nonumber
\\
& -
  \varepsilon
  \sum_{l=0}^k
  \int_{\mathbb{R}}
  g
  \Bigl(
  \nabla_x^2 \bigl(\nabla_x^l u^\delta_x\bigr), 
  \nabla_x^2 \bigl(\nabla_x^l u^\delta_x\bigr)
  \Bigr)
  dx
\nonumber
\\
& +
  \delta
  \sum_{l=0}^k
  \int_{\mathbb{R}}
  \mathcal{O}
  \Bigl(
  \sum_{\beta=0}^1
  g\bigl(\nabla_x^{l+3} u^\delta_x, \nabla_x^{l+3} u^\delta_x\bigr)^{1/2}
  g\bigl(\nabla_x^{l+\beta} u^\delta_x, \nabla_x^{l+\beta} u^\delta_x\bigr)^{1/2}
  \Bigr)
  dx
\nonumber
\\
& +
  \sum_{l=0}^k
  \int_{\mathbb{R}}
  \mathcal{O}
  \Bigl(
  \sum_{\alpha=1}^2
  g\bigl(\nabla_x^{l+\alpha} u^\delta_x, \nabla_x^{l+\alpha} u^\delta_x\bigr)^{1/2}
  g\bigl(\nabla_x^l u^\delta_x, \nabla_x^l u^\delta_x\bigr)^{1/2}
  \Bigr)
  dx
\nonumber
\\
& +
  \sum_{l=0}^k
  \int_{\mathbb{R}}
  \mathcal{O}
  \Bigl(
  g\bigl(\nabla_x^l u^\delta_x, \nabla_x^l u^\delta_x\bigr)
  \Bigr)
  dx.
\label{equation:3006}
\end{align} 
Integration by parts, 
the Schwarz inequality 
and 
an elementary inequality $2ab \leqslant a^2+b^2$ for $a,b>0$  
give  
\begin{align*}
& \sum_{l=0}^k
  \int_{\mathbb{R}}
  g\bigl(\nabla_x^{l+1} u^\delta_x, \nabla_x^{l+1} u^\delta_x\bigr)
  dx
\\
  =
& -
  \sum_{l=0}^k
  \int_{\mathbb{R}}
  g\bigl(\nabla_x^{l+2} u^\delta_x, \nabla_x^l u^\delta_x\bigr)
  dx
\\
  \leqslant
& \sum_{l=0}^k
  \int_{\mathbb{R}}
  g\bigl(\nabla_x^{l+2} u^\delta_x, \nabla_x^{l+2} u^\delta_x\bigr)^{1/2}
  g\bigl(\nabla_x^l u^\delta_x, \nabla_x^l u^\delta_x\bigr)^{1/2}
  dx
\\
  \leqslant
& \frac{\varepsilon}{2}
  \sum_{l=0}^k
  \int_{\mathbb{R}}
  g\bigl(\nabla_x^{l+2} u^\delta_x, \nabla_x^{l+2} u^\delta_x\bigr)
  dx
  +
  \frac{1}{2\varepsilon}
  \sum_{l=0}^k
  \int_{\mathbb{R}}
  g\bigl(\nabla_x^l u^\delta_x, \nabla_x^l u^\delta_x\bigr)
  dx,
\end{align*}
\begin{align*}
& \sum_{l=0}^k
  \int_{\mathbb{R}}
  \mathcal{O}
  \Bigl(
  g\bigl(\nabla_x^{l+2} u^\delta_x, \nabla_x^{l+2} u^\delta_x\bigr)^{1/2}
  g\bigl(\nabla_x^l u^\delta_x, \nabla_x^l u^\delta_x\bigr)^{1/2}
  \Bigr)
  dx
\\
  \leqslant
& C
  \sum_{l=0}^k
  \int_{\mathbb{R}}
  \Bigl(
  g\bigl(\nabla_x^{l+2} u^\delta_x, \nabla_x^{l+2} u^\delta_x\bigr)^{1/2}
  g\bigl(\nabla_x^l u^\delta_x, \nabla_x^l u^\delta_x\bigr)^{1/2}
  dx
\\
  \leqslant
&  \frac{\varepsilon}{2}
  \sum_{l=0}^k
  \int_{\mathbb{R}}
  g\bigl(\nabla_x^{l+2} u^\delta_x, \nabla_x^{l+2} u^\delta_x\bigr)
  dx
  +
  \frac{C^2}{2\varepsilon}
  \sum_{l=0}^k
  \int_{\mathbb{R}}
  g\bigl(\nabla_x^l u^\delta_x, \nabla_x^l u^\delta_x\bigr)
  dx.
\end{align*}
Using these and an elementary inequality $2ab \leqslant a^2+b^2$ for $a,b>0$ again,  
we obtain 
\begin{align}
& \left\lvert
  \delta
  \sum_{l=0}^k
  \int_{\mathbb{R}}
  \mathcal{O}
  \Bigl(
  \sum_{\beta=0}^1
  g\bigl(\nabla_x^{l+3} u^\delta_x, \nabla_x^{l+3} u^\delta_x\bigr)^{1/2}
  g\bigl(\nabla_x^{l+\beta} u^\delta_x, \nabla_x^{l+\beta} u^\delta_x\bigr)^{1/2}
  \Bigr)
  dx
  \right\rvert 
\nonumber
\\
  \leqslant
& \frac{\delta}{2}
  \sum_{l=0}^k
  \int_{\mathbb{R}}
  g\bigl(\nabla_x^{l+3} u^\delta_x, \nabla_x^{l+3} u^\delta_x\bigr)
  dx
\nonumber
\\
  +
& C\delta
  \sum_{l=0}^k
  \int_{\mathbb{R}}
  g\bigl(\nabla_x^{l+1} u^\delta_x, \nabla_x^{l+1} u^\delta_x\bigr)
  dx
  +
  C\delta
  \sum_{l=0}^k
  \int_{\mathbb{R}}
  g\bigl(\nabla_x^l u^\delta_x, \nabla_x^l u^\delta_x\bigr)
  dx
\nonumber
\\
  \leqslant
& \frac{\delta}{2}
  \sum_{l=0}^k
  \int_{\mathbb{R}}
  g\bigl(\nabla_x^{l+3} u^\delta_x, \nabla_x^{l+3} u^\delta_x\bigr)
  dx
  +
  \frac{\varepsilon}{2}
  \sum_{l=0}^k
  \int_{\mathbb{R}}
  g\bigl(\nabla_x^{l+2} u^\delta_x, \nabla_x^{l+2} u^\delta_x\bigr)
  dx
\nonumber
\\
  +
& \left(
  \frac{C^2\delta^2}{2\varepsilon}
  +
  C\delta
  \right)
  \sum_{l=0}^k
  \int_{\mathbb{R}}
  g\bigl(\nabla_x^l u^\delta_x, \nabla_x^l u^\delta_x\bigr)
  dx,
\label{equation:3007} 
\\
& \left\lvert
  \sum_{l=0}^k
  \int_{\mathbb{R}}
  \mathcal{O}
  \Bigl(
  \sum_{\alpha=1}^2
  g\bigl(\nabla_x^{l+\alpha} u^\delta_x, \nabla_x^{l+\alpha} u^\delta_x\bigr)^{1/2}
  g\bigl(\nabla_x^l u^\delta_x, \nabla_x^l u^\delta_x\bigr)^{1/2}
  \Bigr)
  dx
  \right\rvert
\nonumber
\\
  \leqslant
& C
  \sum_{l=0}^k
  \int_{\mathbb{R}}
  \sum_{\alpha=1}^2
  g\bigl(\nabla_x^{l+\alpha} u^\delta_x, \nabla_x^{l+\alpha} u^\delta_x\bigr)^{1/2}
  g\bigl(\nabla_x^l u^\delta_x, \nabla_x^l u^\delta_x\bigr)^{1/2}
  dx
\nonumber
\\
  =
& C
  \sum_{l=0}^k
  \int_{\mathbb{R}}
  g\bigl(\nabla_x^{l+2} u^\delta_x, \nabla_x^{l+2} u^\delta_x\bigr)^{1/2}
  g\bigl(\nabla_x^l u^\delta_x, \nabla_x^l u^\delta_x\bigr)^{1/2}
  dx
\nonumber
\\
  +
& C
  \sum_{l=0}^k
  \int_{\mathbb{R}}
  g\bigl(\nabla_x^{l+1} u^\delta_x, \nabla_x^{l+1} u^\delta_x\bigr)^{1/2}
  g\bigl(\nabla_x^l u^\delta_x, \nabla_x^l u^\delta_x\bigr)^{1/2}
  dx
\nonumber
\\
  \leqslant
& \left(
  \frac{\varepsilon}{4}
  +
  \frac{\varepsilon}{4}
  \right)
  \sum_{l=0}^k
  \int_{\mathbb{R}}
  g\bigl(\nabla_x^{l+2} u^\delta_x, \nabla_x^{l+2} u^\delta_x\bigr)
  dx
\nonumber
\\
  +
& \left(
  \frac{C^2}{\varepsilon}
  +
  \frac{C^2}{2\varepsilon}
  \right)
   \sum_{l=0}^k
  \int_{\mathbb{R}}
  g\bigl(\nabla_x^{l} u^\delta_x, \nabla_x^{l} u^\delta_x\bigr)
  dx.
\label{equation:3008} 
\end{align} 
Substitute 
\eqref{equation:3007} 
and 
\eqref{equation:3008} 
into 
\eqref{equation:3006}. 
We deduce that there exists a positive constant $C(\varepsilon)$ 
depending only on $\varepsilon\in(0,1]$ and 
$\lVert{u_{0x}}\rVert_{H^4(\mathbb{R};TN)}$ 
such that 
$$
\frac{1}{2}
\frac{d}{dt}
\sum_{l=0}^k
\int_{\mathbb{R}}
g\bigl(\nabla_x^l u^\delta_x, \nabla_x^l u^\delta_x\bigr)
dx
\leqslant
C(\varepsilon)
\int_{\mathbb{R}}
g\bigl(\nabla_x^l u^\delta_x, \nabla_x^l u^\delta_x\bigr)
dx,
\quad
t\in[0,T_{\varepsilon,\delta}].
$$
This implies that there exists $T_\varepsilon>0$ 
depending only on $\varepsilon\in(0,1]$ and 
$\lVert{u_{0x}}\rVert_{H^4(\mathbb{R};TN)}$ 
such that 
$\{u^\delta\}_{\delta\in(0,1]}$ is bounded in 
$L^\infty(0,T_\varepsilon;H^k(\mathbb{R};TN))$. 
The standard compactness arguments shows the existence of solution to 
\eqref{equation:pde2}-\eqref{equation:data2}. 
The uniqueness and the continuity in time of solutions can be proved by the same energy method. 
We omit the detail. 
\end{proof}
\section{Uniform Energy Estimates}
\label{section:energy}
In this section we shall obtain uniform energy estimates of 
$\{u^\varepsilon\}_{\varepsilon\in(0,1]}$, and construct a time-local solution to 
\eqref{equation:pde1}-\eqref{equation:data1} 
by the standard compactness argument. 
More precisely we shall show that there exists $T>0$ which is independent of 
$\varepsilon$ such that $T_\varepsilon \geqslant T$ and that 
$\{u_\varepsilon\}_{\varepsilon\in(0,1]}$ 
is bounded in 
$L^\infty\bigl(0,T;H^{k+1}(\mathbb{R};TN)\bigr)$. 
For this purpose, we need to overcome the loss of one derivative 
and introduce a gauge transform of sections of the pullback bundle 
$(u^\varepsilon)^{-1}TN$ of the form 
\begin{align}
  V^\varepsilon_k
& =
  \nabla_x^ku^\varepsilon_x
  +
  \frac{M}{4a}
  \Phi^\varepsilon(t,x)
  J(u^\varepsilon)
  \nabla_x^{k-1}u^\varepsilon_x,
\label{equation:gauge1}
\\
  \Phi^\varepsilon(t,x)
& =
  \int_{-\infty}^x
  g\bigl(u^\varepsilon_y(t,y),u^\varepsilon_y(t,y)\bigr)
  dy,
\nonumber  
\end{align}
where $M$ is a positive constant determined later. 
The second term of the right hand side of \eqref{equation:gauge1} 
corresponds to the pseudodifferential operator $\tilde{\Lambda}$ 
introduced in Section~\ref{section:mizuhara}. 
We will obtain the uniform bounds of 
$$
\mathscr{N}(u^\varepsilon)^2
=
\int_{\mathbb{R}}
\left\{
g(V^\varepsilon_k,V^\varepsilon_k)
+
\sum_{l=0}^{k-1}
g\bigl(\nabla_x^lu^\varepsilon_x,\nabla_x^lu^\varepsilon_x\bigr)
\right\}
dx. 
$$
In view of the Sobolev embedding with $k\geqslant4$, 
it is easy to see that 
$\{\mathscr{N}(u^\varepsilon)\}_{\varepsilon\in(0,1]}$ 
is bounded in $L^\infty(0,T_\varepsilon)$ 
if and only if 
$\{u^\varepsilon\}_{\varepsilon\in(0,1]}$ 
is bounded in $L^\infty(0,T_\varepsilon;H^k(\mathbb{R};TN))$.  
Hence we shall show that there exists $T>0$ which is independent of 
$\varepsilon\in(0,1]$ such that 
$T_\varepsilon \geqslant T$ and that 
$\{\mathscr{N}(u^\varepsilon)\}_{\varepsilon\in(0,1]}$ 
is bounded in $L^\infty(0,T_\varepsilon)$. 
\begin{proof}[{\bf Proof of Existence}] 
Let $k\geqslant4$. 
Firstly we compute the energy estimates for 
$\nabla_x^lu^\varepsilon_x$ with $l=0,1,\dotsc,k-1$. 
In the same way as the previous section we have for $l=0,1,\dotsc,k$ 
\begin{align}
  \nabla_t\nabla_x^lu^\varepsilon_x
& =
  \bigl(-\varepsilon+aJ(u^\varepsilon)\bigr)
  \nabla_x^4
  \bigl(\nabla_x^lu^\varepsilon_x\bigr)
  +
  J(u^\varepsilon)
  \nabla_x^2
  \bigl(\nabla_x^lu^\varepsilon_x\bigr)
\nonumber
\\
& +
  \nabla_x
  \Bigl\{
  R\Bigl(
   \bigl(-\varepsilon+aJ(u^\varepsilon)\bigr)\nabla_x\bigl(\nabla_x^lu^\varepsilon_x\bigr),u_x
   \Bigr)u^\varepsilon_x
\nonumber
\\
& \qquad\quad
  +
  b
  g(u^\varepsilon_x,u^\varepsilon_x)
  J(u^\varepsilon)
  \nabla_x\bigl(\nabla_x^lu^\varepsilon_x\bigr)
\nonumber
\\
& \qquad\quad
  +
  c
  g\Bigl(\nabla_x\bigl(\nabla_x^lu^\varepsilon_x\bigr),u^\varepsilon_x\Bigr)
  J(u^\varepsilon)
  u^\varepsilon_x
  \Bigr\}
\nonumber
\\
& +
  \mathcal{O}
  \biggl(
  g\Bigl(\nabla_x\bigl(\nabla_x^lu^\varepsilon_x\bigr),\nabla_x\bigl(\nabla_x^lu^\varepsilon_x\bigr)\Bigr)^{1/2}
  g\bigl(\nabla_xu^\varepsilon_x,\nabla_xu^\varepsilon_x\bigr)^{1/2}
  g(u^\varepsilon_x,u^\varepsilon_x)^{1/2}
  \biggr)
\nonumber
\\
& +
  \mathcal{O}
  \left(
  \sum_{j=0}^l
  g\bigl(\nabla_x^ju^\varepsilon_x,\nabla_x^ju^\varepsilon_x\bigr)^{1/2}
  \right).
\label{equation:pde10}
\end{align}
Applying the integration by parts to 
the second term of the right hand side of the above for $l=0,1,\dotsc,k-1$, 
we deduce that 
\begin{equation}
\frac{d}{dt}
\sum_{l=0}^{k-1}
\int_{\mathbb{R}}
g\bigl(\nabla_x^lu^\varepsilon_x,\nabla_x^lu^\varepsilon_x\bigr) 
dx
\leqslant
-
2\varepsilon
\sum_{l=0}^{k-1}
\int_{\mathbb{R}}
g\bigl(\nabla_x^{l+2}u^\varepsilon_x,\nabla_x^{l+2}u^\varepsilon_x\bigr) 
dx
+
C_1
\mathscr{N}(u^\varepsilon)^2, 
\label{equation:energy1}
\end{equation}
where $C_1$ is a positive constant which is independent of $\varepsilon\in(0,1]$. 
\par
Secondly we consider the energy estimates for $V^\varepsilon_k$. 
For this purpose we shall obtain the partial differential equation satisfied by $V_k$. 
The principal part of $V^\varepsilon_k$ is $\nabla_x^ku^\varepsilon_x$ 
and satisfies \eqref{equation:pde10} with $l=k$. 
We shall obtain the equation for the lower order term of $V^\varepsilon_k$. 
We begin with 
\begin{equation}
\nabla_t
\left(
\frac{M}{4a}
\Phi^\varepsilon(t,x)
\nabla_x^{k-1}
u^\varepsilon_x
\right)
=
\frac{M}{4a}
\Phi^\varepsilon(t,x)
\nabla_t
\nabla_x^{k-1}
u^\varepsilon_x
+
\left\{
\frac{M}{2a}
\int_{-\infty}^x
g\bigl(\nabla_tu^\varepsilon_x,u^\varepsilon_x\bigr)
dy
\right\}
\nabla_x^{k-1}
u^\varepsilon_x.
\label{equation:pde11}
\end{equation}
Substitute \eqref{equation:pde10} into the right hand side of \eqref{equation:pde11}. 
The first term of it becomes 
\begin{align}
  \frac{M}{4a}
  \Phi^\varepsilon(t,x)
  \nabla_t
  \nabla_x^{k-1}
  u^\varepsilon_x
& =
  \frac{M}{4a}
  \Phi^\varepsilon(t,x)
\nonumber
\\
& \times
  \Biggl[
  \bigl(-\varepsilon+aJ(u^\varepsilon)\bigr)
  \nabla_x^4
  \bigl(\nabla_x^{k-1}u^\varepsilon_x\bigr)
  +
  J(u^\varepsilon)
  \nabla_x^2
  \bigl(\nabla_x^{k-1}u^\varepsilon_x\bigr)
\nonumber
\\
& \qquad
  +
  \nabla_x
  \Bigl\{
  R\Bigl(
   \bigl(-\varepsilon+aJ(u^\varepsilon)\nabla_x^ku^\varepsilon_x,u_x\bigr)
   \Bigr)u^\varepsilon_x
\nonumber
\\
& \qquad\qquad\quad
  +
  b
  g(u^\varepsilon_x,u^\varepsilon_x)
  J(u^\varepsilon)
  \nabla_x^ku^\varepsilon_x
\nonumber
\\
& \qquad\qquad\quad
  +
  c
  g\Bigl(\nabla_x^ku^\varepsilon_x,u^\varepsilon_x\Bigr)
  J(u^\varepsilon)
  u^\varepsilon_x
  \Bigr\}   
\nonumber
\\
& \qquad
  +
  \mathcal{O}
  \left(
  \sum_{l=0}^k
  g\bigl(\nabla_x^lu^\varepsilon_x,\nabla_x^lu^\varepsilon_x\bigr)^{1/2}
  \right)  
  \Biggr]
\nonumber
\\
& =
  \bigl(-\varepsilon+aJ(u^\varepsilon)\bigr)
  \nabla_x^4
  \left\{
  \frac{M}{4a}
  \Phi^\varepsilon(t,x)
  \nabla_x^{k-1}
  u^\varepsilon_x
  \right\}
\nonumber
\\
& +
  J(u^\varepsilon)
  \nabla_x^2
  \left\{
  \frac{M}{4a}
  \Phi^\varepsilon(t,x)
  \nabla_x^{k-1}
  u^\varepsilon_x
  \right\}
\nonumber
\\
& -
  \bigl(-\varepsilon+aJ(u^\varepsilon)\bigr)
  \sum_{j=1}^4
  \frac{4!}{j!(4-j)!}
  \left\{
  \frac{\partial^j}{\partial x^j} 
  \Phi^\varepsilon(t,x)
  \right\}
  \frac{M}{4a}
  \nabla_x^{k+3-j}
  u^\varepsilon_x
\nonumber
\\
& -
  J(u^\varepsilon)
  \sum_{j=1}^2
  \frac{2!}{j!(2-j)!}
  \left\{
  \frac{\partial^j}{\partial x^j} 
  \Phi^\varepsilon(t,x)
  \right\}
  \frac{M}{4a}
  \nabla_x^{k+1-j}
  u^\varepsilon_x
\nonumber
\\
& +
  \mathcal{O}
  \biggl(
  g\bigl(\nabla_x^{k+1}u^\varepsilon_x,\nabla_x^{k+1}u^\varepsilon_x\bigr)^{1/2}
  g(u^\varepsilon_x,u^\varepsilon_x)^{1/2}
  \biggr)
\nonumber
\\
& +
  \mathcal{O}
  \left(
  \sum_{l=0}^k
  g\bigl(\nabla_x^lu^\varepsilon_x,\nabla_x^lu^\varepsilon_x\bigr)^{1/2}
  \right).
\label{equation:lower101}  
\end{align}
Since $\{\Phi^\varepsilon\}_x=g(u^\varepsilon_x,u^\varepsilon_x)$, 
\begin{align*}
  \sum_{j=1}^4
  \frac{4!}{j!(4-j)!}
  \left\{
  \frac{\partial^j}{\partial x^j} 
  \Phi^\varepsilon(t,x)
  \right\}
  \frac{M}{4a}
  \nabla_x^{k+3-j}
  u^\varepsilon_x
& =
  \frac{M}{a}
  \nabla_x
  \Bigl\{
  g(u^\varepsilon_x,u^\varepsilon_x)
  \nabla_x^{k+1}u^\varepsilon_x
  \Bigr\}
\\
& +
  \mathcal{O}
  \biggl(
  g\bigl(\nabla_x^{k+1}u^\varepsilon_x,\nabla_x^{k+1}u^\varepsilon_x\bigr)^{1/2}
  g(u^\varepsilon_x,u^\varepsilon_x)^{1/2}
  \biggr)
\nonumber
\\
& +
  \mathcal{O}
  \left(
  \sum_{l=0}^k
  g\bigl(\nabla_x^lu^\varepsilon_x,\nabla_x^lu^\varepsilon_x\bigr)^{1/2}
  \right). 
\end{align*}
Substituting this into \eqref{equation:lower101}, we have 
\begin{align}
  \frac{M}{4a}
  \Phi^\varepsilon(t,x)
  \nabla_t
  \nabla_x^{k-1}
  u^\varepsilon_x
& =
  \bigl(-\varepsilon+aJ(u^\varepsilon)\bigr)
  \nabla_x^4
  \left\{
  \frac{M}{4a}
  \Phi^\varepsilon(t,x)
  \nabla_x^{k-1}
  u^\varepsilon_x
  \right\}
\nonumber
\\
& +
  J(u^\varepsilon)
  \nabla_x^2
  \left\{
  \frac{M}{4a}
  \Phi^\varepsilon(t,x)
  \nabla_x^{k-1}
  u^\varepsilon_x
  \right\}
\nonumber
\\
& -
  M
  \left(
  -\frac{\varepsilon}{a}+J(u^\varepsilon)
  \right)
  \nabla_x
  \Bigl\{
  g(u^\varepsilon_x,u^\varepsilon_x)
  \nabla_x^{k+1}u^\varepsilon_x
  \Bigr\}
\nonumber
\\
& +
  \mathcal{O}
  \biggl(
  g\bigl(\nabla_x^{k+1}u^\varepsilon_x,\nabla_x^{k+1}u^\varepsilon_x\bigr)^{1/2}
  g(u^\varepsilon_x,u^\varepsilon_x)^{1/2}
  \biggr)
\nonumber
\\
& +
  \mathcal{O}
  \left(
  \sum_{l=0}^k
  g\bigl(\nabla_x^lu^\varepsilon_x,\nabla_x^lu^\varepsilon_x\bigr)^{1/2}
  \right).
\label{equation:lower102}   
\end{align}
On the other hand, 
\begin{align}
  \left\{
  \frac{M}{2a}
  \int_{-\infty}^x
  g\bigl(\nabla_tu^\varepsilon_x,u^\varepsilon_x\bigr)
  dy
  \right\}
  \nabla_x^{k-1} 
& =
  \left\{
  \frac{M}{2a}
  \int_{-\infty}^x
  g\bigl(-\varepsilon\nabla_x^4u^\varepsilon_x+\dotsb,u^\varepsilon_x\bigr)
  dy
  \right\}
  \nabla_x^{k-1}
  \nonumber
\\
& = 
  \mathcal{O}
  \Bigl(
  \lVert{u^\varepsilon_x}\rVert_{H^4}^2
  \Bigr)
  \nabla_x^{k-1}.
\label{equation:lower103}
\end{align}
Multiply \eqref{equation:pde11} by $J(u^\varepsilon)$, 
and substitute \eqref{equation:lower102} and \eqref{equation:lower103} into it. 
We deduce 
\begin{align}
  \nabla_t
  \left(
  \frac{M}{4a}
  \Phi^\varepsilon(t,x)
  J(u^\varepsilon)
  \nabla_x^{k-1}
  u^\varepsilon_x
  \right)
& =
  \bigl(-\varepsilon+aJ(u^\varepsilon)\bigr)
  \nabla_x^4
  \left\{
  \frac{M}{4a}
  \Phi^\varepsilon(t,x)
  J(u^\varepsilon)
  \nabla_x^{k-1}
  u^\varepsilon_x
  \right\}
\nonumber
\\
& +
  J(u^\varepsilon)
  \nabla_x^2
  \left\{
  \frac{M}{4a}
  \Phi^\varepsilon(t,x)
  J(u^\varepsilon)
  \nabla_x^{k-1}
  u^\varepsilon_x
  \right\}
\nonumber
\\
& +
  M
  \left(
  1+\frac{\varepsilon}{a}J(u^\varepsilon)
  \right)
  \nabla_x
  \Bigl\{
  g(u^\varepsilon_x,u^\varepsilon_x)
  \nabla_x^{k+1}u^\varepsilon_x
  \Bigr\}
\nonumber
\\
& +
  \mathcal{O}
  \biggl(
  g\bigl(\nabla_x^{k+1}u^\varepsilon_x,\nabla_x^{k+1}u^\varepsilon_x\bigr)^{1/2}
  g(u^\varepsilon_x,u^\varepsilon_x)^{1/2}
  \biggr)
\nonumber
\\
& +
  \mathcal{O}
  \left(
  \sum_{l=0}^k
  g\bigl(\nabla_x^lu^\varepsilon_x,\nabla_x^lu^\varepsilon_x\bigr)^{1/2}
  \right).
\label{equation:lower104}    
\end{align}
Combining \eqref{equation:pde10} with $l=k$ and \eqref{equation:lower104}, 
we obtain 
\begin{align}
  \nabla_tV_k
& =
  \bigl(-\varepsilon+aJ(u^\varepsilon)\bigr)
  \nabla_x^4
  V_k^\varepsilon
  +
  J(u^\varepsilon)
  \nabla_x^2
  V_k^\varepsilon
  +
  M
  \left(
  1+\frac{\varepsilon}{a}J(u^\varepsilon)
  \right)
  \nabla_x
  \Bigl\{
  g(u^\varepsilon_x,u^\varepsilon_x)
  \nabla_x
  V_k^\varepsilon
  \Bigr\}
\nonumber
\\
& +
  \nabla_x
  \Bigl\{
  R\Bigl(
   \bigl(-\varepsilon+aJ(u^\varepsilon)\bigr)\nabla_x\bigl(\nabla_x^ku^\varepsilon_x\bigr),u_x
   \Bigr)u^\varepsilon_x
\nonumber
\\
& \qquad\quad
  +
  b
  g(u^\varepsilon_x,u^\varepsilon_x)
  J(u^\varepsilon)
  \nabla_x\bigl(\nabla_x^ku^\varepsilon_x\bigr)
  +
  c
  g\Bigl(\nabla_x\bigl(\nabla_x^ku^\varepsilon_x\bigr),u^\varepsilon_x\Bigr)
  J(u^\varepsilon)
  u^\varepsilon_x
  \Bigr\}
\nonumber
\\ 
& +
  \mathcal{O}
  \biggl(
  g\bigl(\nabla_x^{k+1}u^\varepsilon_x,\nabla_x^{k+1}u^\varepsilon_x\bigr)^{1/2}
  g(u^\varepsilon_x,u^\varepsilon_x)^{1/2}
  \biggr)
  +
  \mathcal{O}
  \left(
  \sum_{l=0}^k
  g\bigl(\nabla_x^lu^\varepsilon_x,\nabla_x^lu^\varepsilon_x\bigr)^{1/2}
  \right).
\label{equation:pde12}    
\end{align}
By using \eqref{equation:pde12} and integration by parts, we deduce 
that there exists positive constants $C_2$ and $C_3$ 
which are independent of $\varepsilon\in(0,1]$ 
such that 
\begin{align}
  \frac{d}{dt}
  \int_{\mathbb{R}}
  g(V_k^\varepsilon,V_k^\varepsilon)
  dx
  =
& 2
  \int_{\mathbb{R}}
  g\bigl(\nabla_tV_k^\varepsilon,V_k^\varepsilon\bigr)
  dx
\nonumber
\\
  \leqslant
& -
  2\varepsilon 
  \int_{\mathbb{R}}
  g\bigl(\nabla_x^2V_k^\varepsilon,\nabla_x^2V_k^\varepsilon\bigr)
  dx
\nonumber
\\
& -
  (2M-C_2) 
  \int_{\mathbb{R}}
  g(u^\varepsilon_x,u^\varepsilon_x)
  g\bigl(\nabla_xV_k^\varepsilon,\nabla_xV_k^\varepsilon\bigr)
  dx
  +
  C_3
  \mathscr{N}(u^\varepsilon)^2.
\label{equation:energy2}
\end{align}
Combining \eqref{equation:energy1} and \eqref{equation:energy2}, 
we deduce that there exists a positive constant $C_4$
which is independent of $\varepsilon\in(0,1]$ 
such that 
$$
\frac{d}{dt}
\mathscr{N}(u^\varepsilon)
\leqslant
C_4
\mathscr{N}(u^\varepsilon),
\quad
t\in[0,T_\varepsilon], 
$$
and 
$$
\mathscr{N}\bigl(u^\varepsilon(t)\bigr)
\leqslant
\mathscr{N}(u_0)
e^{C_4t},
\quad
t\in[0,T_\varepsilon]
$$
provided that $M$ is sufficiently large. 
This shows that there exists a positive time $T$ such that 
$T \leqslant T_\varepsilon$ and that 
$\{u^\varepsilon\}_{\varepsilon\in(0,1]}$ 
is bounded in 
$L^\infty(0,T;H^{k+1}(\mathbb{R};TN))$. 
Thus the standard compactness arguments imply that 
there exists $u \in L^\infty(0,T;H^{k+1}(\mathbb{R};TN))$ solving 
\eqref{equation:pde}-\eqref{equation:data}. 
Moreover, the lower semicontinuity of the norm shows that 
\begin{equation}
\mathscr{N}\bigl(u(t)\bigr)
\leqslant
\mathscr{N}(u_0)
e^{C_4t},
\quad
t\in[0,T_\varepsilon],
\label{equation:energy3} 
\end{equation}
which will be used in the next section.  
\end{proof}
%
%
\section{Uniqueness and Continuity in Time} 
\label{section:recovery}
Finally in this section we prove the uniqueness of solution 
and recover the continuity of the unique solution in time. 
\begin{proof}[{\bf Proof of Uniqueness}]
Let 
$u,v \in C\bigl([0,T]\times\mathbb{R};N\bigr) \cap L^\infty\bigl(0,T;H^7(\mathbb{R};TN)\bigr)$ 
be solutions to 
\eqref{equation:pde1}-\eqref{equation:data1}. 
Set 
\begin{alignat*}{3}
  U
& =
  w(u),
& \quad
  \tilde{U}
& =
  dw(\tilde{u}),
& \quad
  \tilde{u}
& =
  \nabla_xu_x
  +
  \frac{M}{4a}
  \Phi(t,x)
  J(u)
  u_x,
\\
  V
& =
  w(v),
& \quad
  \tilde{V}
& =
  dw(\tilde{v}),
& \quad
  \tilde{v}
& =
  \nabla_xv_x
  +
  \frac{M}{4a}
  \Phi(t,x)
  J(v)
  v_x,
\end{alignat*}
\begin{align*}
  \phi(t,x)
& =
  \sum_{l=0}^2
  \Bigl\{
  g_u\bigl(\nabla_x^lu_x(t,x),\nabla_x^lu_x(t,x)\bigr)
  +
  g_v\bigl(\nabla_x^lv_x(t,x),\nabla_x^lv_x(t,x)\bigr)
  \Bigr\},
\\
  \Phi(t,x) 
& =
  \int_{-\infty}^x
  \phi(t,y)
  dy.
\end{align*}
It suffices to show that 
$U(t,x)=V(t,x)$ on $[0,T]\times\mathbb{R}$ 
for sufficiently small $T>0$. 
For this reason, we may assume that 
$U(t,x)$ and $V(t,x)$ are in the same local coordinate patch on $w(N)$ 
for each $(t,x)\in[0,T]\times\mathbb{R}$. 
Note that 
$$
U_t=dw(u_t),
\quad
U_x=dw(u_x),
\quad
V_t=dw(v_t),
\quad
V_x=dw(v_x).
$$
We shall obtain the partial differential equations satisfied by 
$Z=U-V$, $Z_x=U_x-V_x$ and $\tilde{Z}=\tilde{U}-\tilde{V}$, 
and evaluate 
$$
D(t)^2
=
\frac{1}{2}
\int_{\mathbb{R}}
\bigl\{
\lvert{Z(t,x)}\rvert^2
+
\lvert{Z_x(t,x)}\rvert^2
+
\lvert{\tilde{Z}(t,x)}\rvert^2
\bigr\}
dx. 
$$
Note that $D(0)=0$. 
In the same way as the uniform energy estimates in the previous section, 
the highest order derivative $\tilde{Z}$ gains extra smoothness to obtain the energy estimates. 
In what follows different positive constants are denoted by the same letter $C$. 
\par
First we compute the image of \eqref{equation:pde} by $dw$ for the estimate of $Z$ and $Z_x$. 
Let $\nu_j(U)$, ($j=2n+1,\dotsb,d$) be local expression of an orthonormal basis of 
$T_Uw(N)^\perp$. 
Let $P(U):T_U\mathbb{R}^d \rightarrow T_Uw(N)$ be the orthogonal projection. 
Set $\tilde{J}(U)\xi=dw\Bigl(J\bigl(w^{-1}(U))dw^{-1}(\xi)\Bigr)$ 
for $\xi \in T_Uw(N)$. 
We shall obtain the expression of $dw\bigl(J(u)\nabla_x^3u_x\bigr)$. 
Since $\nabla_xJ(u)=0$ and $dw(\nabla_x\dotsb)=P(U)\{dw(\dotsb)\}_x$, we deduce that 
\begin{align}
  dw\bigl(J(u)\nabla_x^3u_x\bigr)
& =
  dw\Bigl(\nabla_x^2\bigl(J(u)\nabla_xu_x\bigr)\Bigr)
\nonumber
\\
& =
  P(U)
  \Bigl[
  P(U)
  \bigl\{
  \tilde{J}(U)P(U)U_{xx}
  \bigr\}_x
  \Bigr]_x
\nonumber
\\
& =
  \Bigl[
  P(U)
  \bigl\{
  \tilde{J}(U)P(U)U_{xx}
  \bigr\}_x
  \Bigr]_x
\nonumber
\\
& -
  \sum_{j=2n+1}^d
  \Bigl\langle
  \Bigl[
  P(U)
  \bigl\{
  \tilde{J}(U)P(U)U_{xx}
  \bigr\}_x
  \Bigr]_x,
  \nu_j(U)
  \Bigr\rangle
  \nu_j(U). 
\label{equation:401} 
\end{align}
Since 
$\langle{P(U)\dotsb,\nu_j(U)}\rangle=0$, $j=2n+1,\dotsc,d$, 
we have 
$$
\Bigl\langle
\Bigl[
P(U)
\bigl\{
\tilde{J}(U)P(U)U_{xx}
\bigr\}_x
\Bigr]_x,
\nu_j(U)
\Bigr\rangle
=
-
\left\langle
P(U)
\bigl\{
\tilde{J}(U)P(U)U_{xx}
\bigr\}_x,
\frac{\partial \nu_j}{\partial U}(U)
U_x
\right\rangle.
$$
Then, the equality \eqref{equation:401} becomes 
\begin{align}
  dw\bigl(J(u)\nabla_x^3u_x\bigr)
& =
  \Bigl[
  P(U)
  \bigl\{
  \tilde{J}(U)P(U)U_{xx}
  \bigr\}_x
  \Bigr]_x
\nonumber
\\
& +
  \sum_{j=2n+1}^d
  \left\langle
  P(U)
  \bigl\{
  \tilde{J}(U)P(U)U_{xx}
  \bigr\}_x,
  \frac{\partial \nu_j}{\partial U}(U)
  U_x
  \right\rangle 
  \nu_j(U).
\label{equation:402}
\end{align}
In the same way, we obtain 
\begin{equation}
dw\bigl(J(u)\nabla_x^3u_x\bigr)
=
dw\Bigl(\nabla_x\bigl(J(u)\nabla_x^2u_x\bigr)\Bigr)
=
P(U)
\Bigl[
\tilde{J}(U)
P(U)
\bigl\{
P(U)U_{xx}
\bigr\}_x
\Bigr]_x.
\label{equation:403} 
\end{equation}
It is relatively easy to compute the image of the second and the third terms 
of the right hand side of \eqref{equation:pde} by $dw$. 
Thus we obtain 
\begin{align}
  U_t
& =
  a
  \Bigl[
  P(U)
  \bigl\{
  \tilde{J}(U)P(U)U_{xx}
  \bigr\}_x
  \Bigr]_x
\nonumber
\\
& +
  a
  \sum_{j=2n+1}^d
  \left\langle
  P(U)
  \bigl\{
  \tilde{J}(U)P(U)U_{xx}
  \bigr\}_x,
  \frac{\partial \nu_j}{\partial U}(U)
  U_x
  \right\rangle 
  \nu_j(U)
\nonumber
\\
& +
  \bigl\{
  1+b\langle{U_x,U_x}\rangle
  \bigr\}
  \tilde{J}(U)P(U)U_{xx}
  +
  c
  \bigl\langle
  P(U)U_{xx},U_x
  \bigr\rangle
  \tilde{J}(U)U_x
\label{equation:404}
\\
& =
  a
  P(U)
  \Bigl[
  \tilde{J}(U)
  P(U)
  \bigl\{
  P(U)U_{xx}
  \bigr\}_x
  \Bigr]_x
\nonumber
\\
& +
  \bigl\{
  1+b\langle{U_x,U_x}\rangle
  \bigr\}
  \tilde{J}(U)P(U)U_{xx}
  +
  c
  \bigl\langle
  P(U)U_{xx},U_x
  \bigr\rangle
  \tilde{J}(U)U_x.
\label{equation:405} 
\end{align}
Using \eqref{equation:404}, we have 
\begin{align}
  Z_t
& =
  a
  \Bigl[
  P(U)
  \bigl\{
  \tilde{J}(U)P(U)Z_{xx}
  \bigr\}_x
  \Bigr]_x 
\nonumber
\\
& +
  a
  \Bigl[
  P(U)
  \bigl\{
  \tilde{J}(U)P(U)V_{xx}
  \bigr\}_x
  -
  P(V)
  \bigl\{
  \tilde{J}(V)P(V)V_{xx}
  \bigr\}_x
  \Bigr]_x 
\nonumber
\\
& +
  a
  \sum_{j=2n+1}^d
  \left\langle
  P(U)
  \bigl\{
  \tilde{J}(U)P(U)Z_{xx}
  \bigr\}_x,
  \frac{\partial \nu_j}{\partial U}(U)
  U_x
  \right\rangle
  \nu_j(U)
\nonumber
\\
& +
  a
  \sum_{j=2n+1}^d
  \bigg[
  \left\langle
  P(U)
  \bigl\{
  \tilde{J}(U)P(U)V_{xx}
  \bigr\}_x,
  \frac{\partial \nu_j}{\partial U}(U)
  U_x
  \right\rangle
  \nu_j(U)
\nonumber
\\
& \qquad\qquad\qquad
  -
  \left\langle
  P(V)
  \bigl\{
  \tilde{J}(V)P(V)V_{xx}
  \bigr\}_x,
  \frac{\partial \nu_j}{\partial V}(V)
  V_x
  \right\rangle
  \nu_j(V)
  \bigg]
\nonumber
\\
& +
  \mathcal{O}
  \bigl(
  \lvert{Z_{xx}}\rvert
  +
  \lvert{Z_x}\rvert
  +
  \lvert{Z}\rvert
  \bigr)
\nonumber
\\
& =
  a
  \Bigl[
  P(U)
  \bigl\{
  \tilde{J}(U)P(U)Z_{xx}
  \bigr\}_x
  \Bigr]_x 
  +
  \mathcal{O}
  \bigl(
  \lvert{U_x}\rvert \lvert{Z_{xxx}}\rvert
  +
  \lvert{Z_{xx}}\rvert
  +
  \lvert{Z_x}\rvert
  +
  \lvert{Z}\rvert
  \bigr). 
\label{equation:406}
\end{align}
Similarly, using \eqref{equation:405}, we have 
\begin{equation}
Z_t
=
a
P(U)
\Bigl[
\tilde{J}(U)
P(U)
\bigl\{
P(U)Z_{xx}
\bigr\}_x
\Bigr]_x 
+
\mathcal{O}
\bigl(
\lvert{Z_{xx}}\rvert
+
\lvert{Z_x}\rvert
+
\lvert{Z}\rvert
\bigr).  
\label{equation:407}
\end{equation}
It follows that 
$Z \in C^1\bigl([0,T];L^2(\mathbb{R};\mathbb{R}^d)\bigr)$ 
from 
$Z_t \in C\bigl([0,T];L^2(\mathbb{R};\mathbb{R}^d)\bigr)$ 
and 
$Z(0)=0 \in L^2(\mathbb{R};\mathbb{R}^d)$. 
In particular, 
$Z(t) \in L^2(\mathbb{R};\mathbb{R}^d)$ for all $t\in[0,T]$ is guaranteed. 
By using \eqref{equation:406} and integration by parts, we deduce 
\begin{align}
  \frac{d}{dt}
  \frac{1}{2}
  \int_{\mathbb{R}}
  \lvert{Z}\rvert^2
  dx
& =
  \int_{\mathbb{R}}
  \langle{Z_t,Z}\rangle
  dx
\nonumber
\\
& =
  a
  \int_{\mathbb{R}}
  \Bigl\langle
  \Bigl[
  P(U)
  \bigl\{
  \tilde{J}(U)P(U)Z_{xx}
  \bigr\}_x
  \Bigr]_x,
  Z
  \Bigr\rangle
  dx
\nonumber
\\
& +
  \int_{\mathbb{R}}
  \Bigl\langle
  \mathcal{O}\bigl(\lvert{Z_{xxx}}\rvert+\lvert{Z_{xx}}\rvert+\lvert{Z_x}\rvert+\lvert{Z}\rvert\bigr),
  Z
  \Bigr\rangle
  dx
\nonumber
\\
& =
  -a
  \int_{\mathbb{R}}
  \Bigl\langle
  P(U)
  \bigl\{
  \tilde{J}(U)P(U)Z_{xx}
  \bigr\}_x,
  Z_x
  \Bigr\rangle
  dx
\nonumber
\\
& +
  \mathcal{O}
  \left(
  \int_{\mathbb{R}}
  \bigl(
  \lvert{Z_{xx}}\rvert^2+\lvert{Z_x}\rvert^2+\lvert{Z}\rvert^2
  \bigr)
  dx
  \right)
\nonumber
\\
& =
  a
  \int_{\mathbb{R}}
  \Bigl\langle
  \tilde{J}(U)P(U)Z_{xx},
  \bigl\{P(U)Z_x\bigr\}_x
  \Bigr\rangle
  dx
\nonumber
\\
& +
  \mathcal{O}
  \left(
  \int_{\mathbb{R}}
  \bigl(
  \lvert{Z_{xx}}\rvert^2+\lvert{Z_x}\rvert^2+\lvert{Z}\rvert^2
  \bigr)
  dx
  \right)
\nonumber
\\
& \leqslant
  C D(t)^2.
\label{equation:408}
\end{align}
By using \eqref{equation:407}, we deduce 
\begin{align}
  \frac{d}{dt}
  \frac{1}{2}
  \int_{\mathbb{R}}
  \lvert{Z_x}\rvert^2
  dx
& =
  \int_{\mathbb{R}}
  \langle{Z_{xt}},Z_x\rangle
  dx
\nonumber
\\
& =
  -
  \int_{\mathbb{R}}
  \langle{Z_t,Z_{xx}}\rangle
  dx
\nonumber
\\
& =
  -
  a
  \int_{\mathbb{R}}
  \Bigl\langle
  P(U)
  \Bigl[
  \tilde{J}(U)P(U)
  \bigl\{
  P(U)Z_{xx}
  \bigr\}_x
  \Bigr]_x,
  Z_{xx}
  \Bigr\rangle
  dx
  +
  \mathcal{O}\bigl(D(t)^2\bigr)
\nonumber
\\
& =
  a
  \int_{\mathbb{R}}
  \Bigl\langle
  \tilde{J}(U)P(U)
  \bigl\{
  P(U)Z_{xx}
  \bigr\}_x,
  \bigl\{
  P(U)Z_{xx}
  \bigr\}_x
  \Bigr\rangle
  dx
  +
  \mathcal{O}\bigl(D(t)^2\bigr)
\nonumber
\\
& =
  \mathcal{O}\bigl(D(t)^2\bigr)
  \leqslant
  C D(t)^2.
\label{equation:409}
\end{align}
\par
Next we shall obtain the equation for $\tilde{Z}$ and evaluate it. 
The equation for $\tilde{u}$ is 
\begin{align}
  \nabla_t\tilde{u}
& =
  aJ(u)\nabla_x^4\tilde{u}
  +
  J(u)\nabla_x^2\tilde{u}
  +
  M\nabla_x\bigl\{\phi(t,x)\nabla_x\tilde{u}\bigr\}
\nonumber
\\
& +
  \nabla_x
  \Bigl[
  R\bigl(J(u)\nabla_x\tilde{u},u_x\bigr)u_x
  +
  b
  g_u(u_x,u_x)J(u)\nabla_x\tilde{u}
  +
  c
  g_u(\nabla_x\tilde{u},u_x)J(u)u_x
  \Bigr]
\nonumber
\\
& +
  \mathcal{O}
  \bigl(
  \lvert{u_x}\rvert \lvert{\nabla_x\tilde{u}}\rvert \lvert{\tilde{u}}\rvert
  +
  \lvert\tilde{u}\rvert+\lvert{u_x}\rvert+\lvert{u}\rvert
  \bigr).  
\label{equation:450}
\end{align}
Set $\xi(u)$ as the right hand side of \eqref{equation:pde} for short. 
We have 
$$
dw\bigl(\xi(u)\bigr)
=
\mathcal{O}
\Bigl(
\lvert\tilde{U}_{xx}\rvert
+
\lvert\tilde{U}_x\rvert
+
\lvert\tilde{U}\rvert
+
\lvert{U_x}\rvert
\Bigr). 
$$
We compute the image of \eqref{equation:450} by $dw$. 
Since $\bigl\langle{\tilde{U},\nu_j(U)}\bigr\rangle=0$, 
the left hand side of \eqref{equation:450} becomes 
\begin{align}
  dw(\nabla_t\tilde{u})
& =
  \tilde{U}_t
  -
  \sum_{j=2n+1}^d
  \bigl\langle
  \tilde{U}_t,\nu_j(U)
  \bigr\rangle 
  \nu_j(U)
\nonumber
\\
& =
  \tilde{U}_t
  +
  \sum_{j=2n+1}^d
  \left\langle
  \tilde{U},\frac{\partial \nu_j}{\partial U}(U)U_t
  \right\rangle 
  \nu_j(U)
\nonumber
\\
& =
  \tilde{U}_t
  +
  \sum_{j=2n+1}^d
  \left\langle
  \tilde{U},
  \frac{\partial \nu_j}{\partial U}(U)
  dw\bigl(\xi(u)\bigr)
  \right\rangle 
  \nu_j(U)
\nonumber
\\
& =
  \tilde{U}_t
  +
  \sum_{j=2n+1}^d
  \Bigl\langle
  \tilde{U},
  \mathcal{O}
  \Bigl(
  \lvert\tilde{U}_{xx}\rvert
  +
  \lvert\tilde{U}_x\rvert
  +
  \lvert\tilde{U}\rvert
  +
  \lvert{U_x}\rvert
  \Bigr)
  \Bigr\rangle 
  \nu_j(U).
\label{equation:451}
\end{align} 
In the same way as \eqref{equation:405} 
together with \eqref{equation:450} and \eqref{equation:451}, 
we deduce that 
\begin{align*}
  \tilde{U}_t
& =
  a
  P(U)
  \Bigl[
  P(U)
  \bigl\{
  \tilde{J}(U)
  P(U)
  \bigl(
  P(U)
  \tilde{U}_x
  \bigr)_x
  \bigr\}_x
  \Bigr]_x
\\
\nonumber
& +
  P(U)
  \bigl\{
  \tilde{J}(U)
  P(U)
  \tilde{U}_x
  \bigr\}_x
  +
  M
  \bigl(
  \phi(t,x)
  \tilde{U}_x
  \bigr)_x
\\
& +
  \sum_{j=2n+1}^d
  \Bigl\langle
  \tilde{U},
  \mathcal{O}
  \Bigl(
  \lvert\tilde{U}_{xx}\rvert
  +
  \lvert\tilde{U}_x\rvert
  +
  \lvert\tilde{U}\rvert
  +
  \lvert{U_x}\rvert
  \Bigr)
  \Bigr\rangle 
  \nu_j(U)
\\
& +
  \mathcal{O}
  \Bigl(
  \bigl(
  \phi(t,x)
  \tilde{U}_x
  \bigr)_x  
  \Bigr)
  +
  \mathcal{O}
  \Bigl(
  \phi(t,x)^{1/2}
  \lvert\tilde{U}_x\rvert
  +
  \lvert{\tilde{U}}\rvert
  +
  \lvert{U_x}\rvert
  \Bigr).
\end{align*}
Taking the difference between the equations for $\tilde{U}$ and $\tilde{V}$, we obtain 
\begin{align*}
  \tilde{Z}_t
& =
  a
  P(U)
  \Bigl[
  P(U)
  \bigl\{
  \tilde{J}(U)
  P(U)
  \bigl(
  P(U)
  \tilde{Z}_x
  \bigr)_x
  \bigr\}_x
  \Bigr]_x
\\
\nonumber
& +
  P(U)
  \bigl\{
  \tilde{J}(U)
  P(U)
  \tilde{Z}_x
  \bigr\}_x
  +
  M
  \bigl(
  \phi(t,x)
  \tilde{Z}_x
  \bigr)_x
\\ 
& +
  \sum_{j=2n+1}^d
  \Bigl\langle
  \tilde{U},
  \mathcal{O}
  \Bigl(
  \lvert\tilde{Z}_{xx}\rvert
  \Bigr)
  \Bigr\rangle 
  \nu_j(U)
\\
& +
  \mathcal{O}
  \Bigl(
  \bigl(
  \phi(t,x)
  \tilde{Z}_x
  \bigr)_x  
  \Bigr)
  +
  \mathcal{O}
  \Bigl(
  \phi(t,x)^{1/2}
  \lvert\tilde{Z}_x\rvert
  +
  \lvert{\tilde{Z}}\rvert
  +
  \lvert{Z_x}\rvert
  +
  \lvert{Z}\rvert
  \Bigr).
\end{align*}
Using this and integration by parts, 
we deduce that there exists a positive constant $C_1$ such that 
\begin{align}
  \frac{d}{dt}
  \frac{1}{2}
  \int_{\mathbb{R}}
  \langle{\tilde{Z},\tilde{Z}}\rangle
  dx
& =
  \int_{\mathbb{R}}
  \langle{\tilde{Z}_t,\tilde{Z}}\rangle
  dx
\nonumber
\\
& \leqslant
  a
  \int_{\mathbb{R}}
  \Bigl\langle
  P(U)
  \Bigl[
  P(U)
  \bigl\{
  \tilde{J}(U)
  P(U)
  \bigl(
  P(U)
  \tilde{Z}_x
  \bigr)_x
  \bigr\}_x
  \Bigr]_x,
  \tilde{Z}
  \Bigr\rangle
  dx
\label{equation:410}
\\
& +
  \int_{\mathbb{R}}
  \Bigl\langle
  P(U)
  \bigl\{
  \tilde{J}(U)
  P(U)
  \tilde{Z}_x
  \bigr\}_x,
  \tilde{Z}
  \Bigr\rangle
  dx
\label{equation:411}
\\
& +
  \sum_{j=2n+1}^d
  \int_{\mathbb{R}}
  \mathcal{O}
  \Bigl(
  \phi(t,x)^{1/2}
  \lvert\tilde{Z}_{xx}\rvert
  \Bigr)
  \langle
  \nu_j(U),
  \tilde{Z}
  \rangle
  dx
\label{equation:412}
\\
& -
  (M-C_1)
  \int_{\mathbb{R}}
  \phi(t,x)
  \lvert{\tilde{Z}_x}\rvert^2
  dx
  +
  C D(t)^2.
\nonumber  
\end{align}
We will evaluate 
\eqref{equation:410}, 
\eqref{equation:411} 
and 
\eqref{equation:412} 
respectively. 
First we remark that 
\begin{equation}
\bigl\langle
\nu_j(U),\tilde{Z}
\bigr\rangle
=
\bigl\langle
\nu_j(U),\tilde{U}-\tilde{V}
\bigr\rangle 
=
-
\bigl\langle
\nu_j(U),\tilde{V}
\bigr\rangle 
=
-
\bigl\langle
\nu_j(U)-\nu_j(V),\tilde{V}
\bigr\rangle 
=
\mathcal{O}
\Bigl(
\phi(t,x)^{1/2}
Z
\Bigr)
\label{equation:452}
\end{equation}
since 
$\bigl\langle{\nu_j(U),\tilde{U}}\bigr\rangle=0$ 
and 
$\bigl\langle{\nu_j(V),\tilde{V}}\bigr\rangle=0$. 
Applying \eqref{equation:452} and integration by parts to \eqref{equation:412}, we have 
\begin{equation}
\text{\eqref{equation:412}}
\leqslant
C D(t)^2. 
\label{equation:413}
\end{equation}
Here we note that 
\begin{align}
& \Bigl\langle
  \Bigl[
  P(U)
  \bigl\{
  \tilde{J}(U)
  P(U)
  \bigl(
  P(U)
  \tilde{Z}_x
  \bigr)_x
  \bigr\}_x
  \Bigr]_x,
  \nu_j(U)
  \Bigr\rangle
\nonumber
\\
  =
& -
  \left\langle
  P(U)
  \bigl\{
  \tilde{J}(U)
  P(U)
  \bigl(
  P(U)
  \tilde{Z}_x
  \bigr)_x
  \bigr\}_x,
  \frac{\partial \nu_j}{\partial U}(U)
  U_x
  \right\rangle
  \label{equation:453}
\end{align}
since 
$$
\Bigl\langle
P(U)
\bigl\{
\tilde{J}(U)
P(U)
\bigl(
P(U)
\tilde{Z}_x
\bigr)_x
\bigr\}_x,
\nu_j(U)
\Bigr\rangle
=
0.
$$
Applying 
integration by parts, 
\eqref{equation:452} 
and 
\eqref{equation:453} 
to 
\eqref{equation:410}, 
we have
\begin{align}
  \text{\eqref{equation:410}}
& =
  a
  \int_{\mathbb{R}}
  \Bigl\langle
  \Bigl[
  P(U)
  \bigl\{
  \tilde{J}(U)
  P(U)
  \bigl(
  P(U)
  \tilde{Z}_x
  \bigr)_x
  \bigr\}_x
  \Bigr]_x,
  \tilde{Z}
  \Bigr\rangle
  dx
\nonumber
\\
& -
  a
  \sum_{j=2n+1}^d
  \int_{\mathbb{R}}
  \Bigl\langle
  \Bigl[
  P(U)
  \bigl\{
  \tilde{J}(U)
  P(U)
  \bigl(
  P(U)
  \tilde{Z}_x
  \bigr)_x
  \bigr\}_x
  \Bigr]_x,
  \nu_j(U)
  \Bigr\rangle
  \bigl\langle
  \nu_j(U),\tilde{Z}
  \bigr\rangle
  dx
\nonumber
\\
& =
  -
  a
  \int_{\mathbb{R}}
  \Bigl\langle
  \bigl\{
  \tilde{J}(U)
  P(U)
  \bigl(
  P(U)
  \tilde{Z}_x
  \bigr)_x
  \bigr\}_x,
  P(U)
  \tilde{Z}_x
  \Bigr\rangle
  dx
\nonumber
\\
& +
  a
  \sum_{j=2n+1}^d
  \int_{\mathbb{R}}
  \left\langle
  P(U)
  \bigl\{
  \tilde{J}(U)
  P(U)
  \bigl(
  P(U)
  \tilde{Z}_x
  \bigr)_x
  \bigr\}_x,
  \frac{\partial \nu_j}{\partial U}(U)
  U_x
  \right\rangle
  \bigl\langle
  \nu_j(U),\tilde{Z}
  \bigr\rangle
  dx
\nonumber
\\
& =
  a
  \int_{\mathbb{R}}
  \Bigl\langle
  \tilde{J}(U)
  P(U)
  \bigl(
  P(U)
  \tilde{Z}_x
  \bigr)_x,
  \bigl(
  P(U)
  \tilde{Z}_x
  \bigr)_x
  \Bigr\rangle
  dx
\nonumber
\\
& +
  \int_{\mathbb{R}}
  \mathcal{O}
  \Bigl(
  \phi(t,x)^{1/2}
  \bigl(
  \lvert\tilde{Z}_{xxx}\rvert
  +
  \lvert\tilde{Z}_{xx}\rvert
  +
  \lvert\tilde{Z}_{x}\rvert
  \bigr)
  \Bigr)
  \mathcal{O}
  \Bigl(
  \phi(t,x)^{1/2}Z
  \Bigr)
  dx
\nonumber
\\
& =
  \int_{\mathbb{R}}
  \mathcal{O}
  \Bigl(
  \phi(t,x)
  \lvert\tilde{Z}_x\rvert^2
  +
  \lvert\tilde{Z}\rvert^2
  +
  \lvert{Z_x}\rvert^2
  +
  \lvert{Z}\rvert^2
  \Bigr)
  dx
\nonumber
\\
& \leqslant
  C_2
  \int_{\mathbb{R}}
  \phi(t,x)
  \lvert\tilde{Z}_x\rvert^2
  dx
  +
  C D(t)^2.
\label{equation:414}
\end{align}
Using integration by parts, we have 
\begin{align}
  \text{\eqref{equation:411}}
& =
  -
  \int_{\mathbb{R}}
  \bigl\langle
  \tilde{J}(U)P(U)\tilde{Z}_x,P(U)\tilde{Z}_x
  \bigr\rangle
  -
  \int_{\mathbb{R}}
  \left\langle
  \tilde{J}(U)P(U)\tilde{Z}_x,
  \frac{\partial P}{\partial U}(U)
  U_x
  \tilde{Z}
  \right\rangle
\nonumber
\\ 
& =
  \int_{\mathbb{R}}
  \mathcal{O}\Bigl(\phi(t,x)^{1/2}\lvert\tilde{Z}_x\rvert \lvert\tilde{Z}\rvert\Bigr)
  dx
\nonumber
\\
& \leqslant
  C_3
  \int_{\mathbb{R}}
  \phi(t,x)
  \lvert\tilde{Z}_x\rvert^2
  dx
  +
  C D(t)^2.
\label{equation:415}
\end{align}
Combining 
\eqref{equation:413}, 
\eqref{equation:414} 
and  
\eqref{equation:415}, 
we obtain 
\begin{equation}
\frac{d}{dt}
\frac{1}{2}
\int_{\mathbb{R}}
\lvert\tilde{Z}\rvert^2
dx
\leqslant
-
(M-C_1-C_2-C_3)
\int_{\mathbb{R}}
\phi(t,x)
\lvert\tilde{Z}_x\rvert^2
+
C D(t)^2
\leqslant
C D(t)^2
\label{equation:416}
\end{equation}
provided that $M$ is sufficiently large. 
Combining 
\eqref{equation:408}, 
\eqref{equation:409} 
and  
\eqref{equation:416}, 
we have 
$$
\frac{d}{dt}
D(t)^2
\leqslant
C D(t)^2,
\quad
D(0)^2=0,
$$
which implies that $D(t)^2\equiv0$. 
This completes the proof of the uniqueness of solutions. 
\end{proof}
Finally, we shall prove the continuity of the unique solution in time. 
\begin{proof}[{\bf Proof of Continuity in Time}]
Let $k$ be an integer not smaller than six, 
and let $u$ be a unique solution to the initial value problem satisfying 
$u \in L^\infty(0,T;H^{k+1}(\mathbb{R};TN))$.  
By using the equation \eqref{equation:pde}, 
we deduce that 
$u_t \in L^\infty(0,T;H^{k-3}(\mathbb{R};TN))$ 
and 
$u \in C([0,T];H^{k-3}(\mathbb{R};TN))$. 
Moreover, the interpolation implies that 
$u \in C([0,T];H^{k}(\mathbb{R};TN))$ and 
$u$ is $H^{k+1}$-valued continuous weakly. 
Set 
\begin{align*}
  V_k
& =
  \nabla_x^ku_x
  +
  \frac{M}{4a}
  \Phi(t,x)
  J(u)
  \nabla_x^{k-1}u_x,
\\
  \Phi(t,x)
& =
  \int_{-\infty}^x
  g\bigl(u_y(t,y),u_y(t,y)\bigr)
  dy,
\\
  \mathscr{N}(u)^2
& =
  \int_{\mathbb{R}}
  \left\{
  g(V_k,V_k)
  +
  \sum_{l=0}^{k-1}
  g\bigl(\nabla_x^lu_x,\nabla_x^lu_x\bigr)
  \right\}
  dx, 
\end{align*}
where $M$ is a positive constant determined in Section~\ref{section:energy}. 
It suffices to show that $V_k$ is $L^2(\mathbb{R};TN)$-valued continuous at $t=0$. 
It is easy to see that $V_k$ is $L^2(\mathbb{R};TN)$-valued continuous weakly and that 
$$
\int_{\mathbb{R}}
g(V_k(0),V_k(0))
dx
\leqslant
\liminf_{t\downarrow0}
\int_{\mathbb{R}}
g(V_k(t),V_k(t))
dx.
$$
By using \eqref{equation:energy3} and $u \in C([0,T];H^{k}(\mathbb{R};TN))$, we deduce  
$$
\limsup_{t\downarrow0}
\int_{\mathbb{R}}
g(V_k(t),V_k(t))
dx
\leqslant
\int_{\mathbb{R}}
g(V_k(0),V_k(0))
dx.
$$
Hence 
$$
\lim_{t\downarrow0}
\int_{\mathbb{R}}
g(V_k(t),V_k(t))
dx
=
\int_{\mathbb{R}}
g(V_k(0),V_k(0))
dx.
$$
Combining this and weak continuity, we deduce that 
$V_k$ is $L^2(\mathbb{R};TN)$-valued continuous at $t=0$. 
\end{proof}
%
%

\end{document}